\newif\ifdraft
\ifdraft\usepackage[notref,notcite]{showkeys}\fi
\def\e{\epsilon}
\DeclareMathOperator{\var}{Var}
\newcommand\myeq{\mathrel{\stackrel{\makebox[0pt]{\tiny d}}{=}}}
\newtheorem{theorem}{Theorem}
\newtheorem{lemma}{Lemma}
\newtheorem{proposition}{Proposition}
\newtheorem{remark}{Remark}
\newtheorem{corollary}{Corollary}
\newtheorem{definition}{Definition}
\newcommand\blfootnote[1]{%
  \begingroup
  \renewcommand\thefootnote{}\footnote{#1}%
  \addtocounter{footnote}{-1}%
  \endgroup
}
\begin{document}

\title[FSDE satisfying fluctuation-dissipation theorem]{Fractional stochastic differential equations satisfying fluctuation-dissipation theorem}

\author{Lei Li}
\address{\hskip-\parindent
Lei Li\\
Department of Mathematics\\
Duke University\\
Durham, NC 27708, USA
}
\email{leili@math.duke.edu}

\author{Jian-Guo Liu}
\address{\hskip-\parindent
Jian-Guo Liu\\
Departments of Physics and Mathematics\\
Duke University\\
Durham, NC 27708, USA
}
\email{jliu@phy.duke.edu}

\author{Jianfeng Lu}
\address{\hskip-\parindent
Jianfeng Lu\\
Department of Mathematics, Department of Physics, and Department of Chemistry, \\
Duke University, Box 90320,\\
Durham, NC 27708, USA
}
\email{jianfeng@math.duke.edu}

\maketitle

\begin{abstract}
 We propose in this work a fractional stochastic differential equation (FSDE) model consistent with the over-damped limit of the generalized Langevin equation model. As a result of the `fluctuation-dissipation theorem', the differential equations driven by fractional Brownian noise to model memory effects should be paired with Caputo derivatives, and this FSDE model should be understood in an integral form. We establish the existence of  strong solutions for such equations and discuss the ergodicity and convergence to Gibbs measure. In the linear forcing regime, we show rigorously the algebraic convergence to Gibbs measure when the `fluctuation-dissipation theorem' is satisfied, and this verifies that satisfying `fluctuation-dissipation theorem' indeed leads to the correct physical  behavior. We further discuss possible approaches to analyze the ergodicity and convergence to Gibbs measure in the nonlinear forcing regime, while leave the rigorous analysis for future works. The FSDE model proposed is suitable for systems in contact with heat bath with power-law kernel and subdiffusion behaviors.
\end{abstract}

\blfootnote{{\it Keywords} fractional SDE;  fluctuation-dissipation-theorem; Caputo derivative; fractional Brownian motion; generalized Langevin equation; subdiffusion}

\section{Introduction}

For a particle in contact with a heat bath (such as a heavy particle
surrounded by light particles), the following stochastic equation is
often used to describe the evolution of the velocity of the particle
\[ 
m\dot{v}=-\gamma v+\eta,
\]
where dot denotes derivative on time, $-\gamma v$ counts for friction and $\eta$ is a Gaussian white noise which could be understood as the distributional derivative of the Brownian motion (or Wiener process) up to a constant factor. This equation should be understood in the SDE form 
\[ 
m\,dv=-\gamma v\,dt+\sqrt{2D_x}\,dW,
\]
where $W$ is a standard Brownian motion and $D_x$ is some constant to be determined.  Adding the equation for position and considering external force, one has the Langevin equation: 
\begin{gather}\label{eq:langevin}
\dot{x}=v, ~~ m\dot{v}=-\nabla V(x)-\gamma v+\eta.
\end{gather}
Since the friction coefficient $\gamma$ and random force $\eta$ both stem from interactions between the particle and the environment, they should be related. The `fluctuation-dissipation theorem' \footnote{Note that we are putting quotes for the physical theorems as they are critical claims from physics compared with mathematical theorems that are rigorously justified.} (\cite{nyquist28,cw51}) provides a precise
connection between them, such that the covariance satisfies
\begin{gather}\label{eq:flucdiss1}
\mathbb{E}(\eta(t_1)\eta(t_2))=2kT\gamma \delta(t_1-t_2),
\end{gather}
where $k$ is the Boltzmann constant and $T$ is the absolute
temperature, leading to $D_x=kT\gamma$. $\mathbb{E}$ is the `ensemble average' in physical language and it is `expectation' over some underlying probability space in mathematical language.  Relation \eqref{eq:flucdiss1} was formulated by Nyquist in \cite{nyquist28} and then justified by Callen and Welton in \cite{cw51}. The physical meaning of this relation is that the fluctuating forces must restore the energy dissipated by the friction so that the balance is achieved and the temperature of the heavy particle can reach the correct value. To see this in another view point, one may derive, either using Ito's formula or using Green-Kubo formula, that $D_x$ is actually the diffusion constant for position $x$, and $D_x=kT\gamma$ is called the Einstein-Smoluchowski relation \cite{mprv08}. This relation also says that the fluctuation and dissipation must be related.

In the `overdamped' regime where the inertia can be neglected ($m\ll 1$), the Langevin equation is reduced to the following well-known SDE \cite{freidlin04}: 
\begin{gather}
\gamma\,dx=-\nabla V(x)\,dt+\sqrt{2D_x}\,dW.
\end{gather} 

In \cite{mori65,kubo66},  the generalized Langevin equation (GLE) was proposed to model particle motion in contact with a heat bath when the random force is no longer memoryless: 
\begin{gather}\label{eq:gneralizedLangevin}
  \dot{x}=v,~~ m\dot{v}=-\nabla V-\int_{t_0}^t\gamma(t-s)v(s)\,ds
  +R(t),
\end{gather}
where $R(t)$ is some random force. Now the friction is the convolution between a kernel function $\gamma$  and the velocity $v(s)$ so that there is memory in dissipation in this model. For the particle to achieve equilibrium at the prescribed temperature, the fluctuating force $R(t)$ and the friction kernel $\gamma$ must be related. Without the external force (i.e.
$\nabla V=0$), Kubo assumed that $\mathbb{E}(v(t_0)R(t))=0, t>t_0$ and that $v$ is a stationary process. He derived formally (though he used the existence of the one-sided Fourier transform of $\gamma$, the formal derivation still holds if $\gamma\notin L^1[0,\infty)$ as we can understand the transform in the distribution sense or replace the one-sided Fourier transform with Laplace transform) that
\begin{gather}\label{eq:flucdis}
\mathbb{E}( R(t_0)R(t_0+t))=m\mathbb{E}(v(t_0)^2) \gamma(|t|)=kT\gamma(|t|).
\end{gather}
There are other formal derivations as well (e.g. \cite{felderhof78}). These derivations are not fully convincing though on the mathematical rigorous level. In \cite{kubo66}, Kubo assumed the relation $\mathbb{E}(v(t_0)R(t))=0, t>t_0$ arguing using causality. The issue is though $R(t)$ does not affect $v(t_0)$, $v(t_0)$ can affect $R(t)$. In \cite{felderhof78},
Felderholf obtained this relation from `Nyquist's theorem', while no justification is given to the latter. 

For a more convincing and rigorous derivation of the GLE \eqref{eq:gneralizedLangevin} and relation \eqref{eq:flucdis}, one
could start from a system of interacting particles as the Kac-Zwanzig model (see \cite{fkm65,zwanzig73,gks04,kou08}).  In this model, the surrounding particles in the heat bath have harmonic interactions with the particle under consideration, which is a good approximation if the configuration is near equilibrium.  The whole system evolves under the total Hamiltonian. If the initial data satisfy the Gibbs measure, then after integrating out the variables for the surrounding particles, one obtains the GLE where the relation \eqref{eq:flucdis} is satisfied. From the Kac-Zwanzig model, we may find that in GLE the random force $R(t)$ is not necessarily independent of $x(0)$. 

Relation \eqref{eq:flucdis} is called the `fluctuation-dissipation theorem' for GLE. This relation simply says the random force must
balance the friction so that the system has a nontrivial equilibrium corresponds to the prescribed temperature.  Note that if the kernel
$\gamma(t)$ tends to $\gamma \delta(t)$, the relation \eqref{eq:flucdiss1} can be recovered. The coefficient `2' comes from the fact that
\[
 \int_{-\infty}^{\infty}\mathbb{E}(R(t_0)R(t_0+t))\,dt=2kT\int_0^{\infty}\gamma(t)\,dt.
\]
There are few rigorous mathematical justifications of the `fluctuation-dissipation theorem', all in the context of generalized
Langevin equations. In \cite{pavliotis14}, the author tried to rephrase the `fluctuation-dissipation theorems' and the related linear
response theory in mathematical language.  Hairer and Majda in \cite{hm10} developed a framework to justify the use the linear
response theory through the `fluctuation-dissipation theorem' for studying climate models.

\smallskip

In this work, through a scaling argument, we find it reasonable to consider the over-damped limit of GLE driven by fractional Brownian noise, and obtain the following class of fractional SDE (FSDE) models (Equation \eqref{eq:fsde1})
\begin{gather*}
D_c^{\alpha}x=-V'(x)+C_H\dot{B}_H, 
\end{gather*}
where the fractional derivative is in Caputo sense while $\dot{B}_H$ is the fractional Brownian noise (the distributional derivative of fractional Brownian motion). This differential form can be rewritten as an integral form (Equation \eqref{eq:fsde2}):
\begin{gather*}
x(t)=x(0)-\frac{1}{\Gamma(\alpha)}\int_0^t(t-s)^{\alpha-1}V'(x(s))\,ds+\frac{C_H}{\Gamma(\alpha)}\int_0^t(t-s)^{\alpha-1}dB_{H},
\end{gather*}
which is viewed as the rigorous definition of our FSDE model. After
proving that the stochastic integral is a continuous process in
Section \ref{sec:exisuniq}, the existence and uniqueness of strong
solutions become straightforward.

Let us remark that using fractional Brownian motion as a model
  for long range correlations is quite common: for example, waves in
  random media \cite{ms11}, subdiffusion process in complex system
  \cite{kouxie04,mlckx05,mwbk09,kou08,db09}. It has been observed in \cite{kouxie04,mlckx05,kou08} that the systems of protein molecules have power-law memory kernel and subdiffusion behavior.  Remarkably,  Kou and Xie \cite{kouxie04,kou08} showed that incorporating  fractional Brownian noise into the generalized Langevin equation  yields a model with a power-law kernel for subdiffusion and the  results had excellent agreement with a single-molecule experiment from biological science.

If $\alpha=\alpha^*:=2-2H$, the `fluctuation-dissipation
  theorem' is satisfied. When the force is linear, we show rigorously that
  the process has ergodicity and converges algebraically to the Gibbs measure (see Theorem \ref{thm:linear}).
  When the force is nonlinear, studying the ergodicity and asymptotic behavior is 
  challenging. We believe this problem must be solved by rewriting the FSDE model into Markovian processes. 
  So, we propose two possible approaches. The first approach is to rewrite 
  our FSDE model as an infinite-dimensional Ornstein-Uhlenbeck (OU) process with mixing. 
  We hope this infinite-dimensional OU with mixing can be a possible framework for proving the convergence to equilibrium satisfying Gibbs measure.  Another approach is to take the limit in a heat bath model.
 In summary, satisfying the `fluctuation-dissipation theorem' leads to the correct physical
  behavior: there is balance between the dissipation and fluctuation
  effects from the random forcing such that the Gibbs measure is the
  final equilibrium distribution. This means that in the  correct physical FSDE models fractional Brownian noise must be
  paired with Caputo derivatives.
  
While FSDEs have been discussed in
some previous works already, our FSDE model \eqref{eq:fsde2} motivated
by the `fluctuation-dissipation theorem' seems to be new. The authors
of \cite{no02,hairer05,hp11} discussed FSDEs driven by fractional
Brownian motions but used the usual first order derivative, which
means that the convolution kernel for friction is a Dirac delta
  and there is no memory in the dissipating term, while the
  fluctuation term is given by fractional Brownian motions that have
  memory so that there is no balance. In \cite{srr13}, the Caputo
derivative is used but they used the usual white noise to drive the
process. According to the above formal derivation, when modeling a
particle in contact with a heat bath with memory effects, the natural
noise associated with the Caputo derivative should be the fractional
noise. This means we will probably require $\alpha=\alpha^*$ for the
correct model from physical concerns. We admit however that it is
possible that the models with $\alpha\neq \alpha^*$ may be used to
describe some other situations instead of the physical case we
consider here.

The rest of the paper is organized as follows. In Section \ref{sec:processg}, we study the stochastic integral in our FSDE model \eqref{eq:fsde2} in detail and prove that it is continuous. Using the continuity of the stochastic integral, we obtain in Section \ref{sec:exisuniq} the existence and uniqueness of strong solutions for FSDE \eqref{eq:fsde2} on the interval $[0,\infty)$ provided $V'(\cdot)$ is Lipschtiz continuous. In Section \ref{sec:asym}, we focus on the asymptotic behavior of the strong solutions of \eqref{eq:fsde2}. In particular, in the linear regimes, (i.e. $V'(\cdot)$ is a linear function), we compute the solutions exactly and show that the solution converges in distribution to a stationary process satisfying Gibbs measure. In the nonlinear regime, we provide two possible frameworks for studying the asymptotic behaviors when the `fluctuation-dissipation theorem' is satisfied. We argue formally that the FSDE can be reduced from some Markovian processes in infinite dimensions. The rigorous study of the nonlinear regimes is left for future works.

\section{The FSDE model}

In this section, we propose the fractional SDE model from the GLE with fractional Brownian noise. By a scaling argument in GLE, we argue that in the regimes where the environment is viscous or the mass is small, we can consider the over-damped limit of GLE driven by fractional Brownian noise (i.e., the (distributional) derivative of fractional Brownian motion) and obtain the fractional SDE model, in which the Caputo derivative is associated with the fractional Brownian noise. This model is new. It recovers the subdiffusion discussed in \cite{kouxie04,mwbk09} and satisfies the `fluctuation-dissipation theorem'.

\subsection{Fractional Brownian noise in complex systems}

The studies in \cite{kouxie04,mlckx05,kou08,mwbk09} indicate that
  fractional Brownian noise commonly arises in complex physical
  systems. We now give a brief introduction to fractional Brownian
  motion and present the GLE with fractional Brownian noise.

The fractional Brownian motion $B_H$ (see \cite{mv68,nualart06} for more detailed discussions) with Hurst parameter $H\in(0,1)$ is a Gaussian process (i.e., the joint distribution for $(B_H(t_1), \ldots, B_H(t_d))$ is a $d$-dimensional normal distribution for any $(t_1, \ldots, t_d)\in\mathbb{R}_+^d$) defined on some probability space $(\Omega, \mathcal{F}, P)$ with mean zero and covariance 
\begin{gather}\label{fb:cov}
\mathbb{E}(B_t^HB_s^H)=R_H(s, t)=\frac{1}{2}\left(s^{2H}+t^{2H}-|t-s|^{2H}\right),
\end{gather}
where $\mathbb{E}$ means the expectation over the underlying
probability space. By definition, $B_H$ has stationary increments
which are normal distributions with
$\mathbb{E}((B_H(t)-B_H(s))^2)=(t-s)^{2H}$. By the Kolmogorov
continuity theorem, $B_H$ is H\"older continuous with order
$H-\epsilon$ for any $\epsilon\in (0, H)$. $B_H$ has finite
$1/H$-variation. Besides, it is self similar:
$B_H(t)\myeq a^{-H}B_H(at)$ where `$\myeq$' means they have the same distribution. It is non-Markovian except for $H=1/2$
when it is reduced to the Brownian motion (i.e., Wiener process).

The existence of fractional Brownian motion can be proved by some explicit representations. In \cite{mv68}, the following representation is given 
\begin{multline}
B_H(t)=C_1(H)\left(\int_0^t(t-s)^{H-\frac{1}{2}}dW(s)+\int_{-\infty}^0((t-s)^{H-\frac{1}{2}}-(-s)^{H-\frac{1}{2}})dW(s)\right)\\
=C_1(H)\int_{-\infty}^0(-r)^{H-\frac{1}{2}}(dW(r+t)-dW(r)),
\end{multline}
where $W$ is a normal Brownian motion and $C_1(H)$ is a constant to make \eqref{fb:cov} valid. This is also used in \cite{hairer05}. In \cite{du99,no02},  one uses
\begin{gather}
B_H(t)=C_2(H)\int_0^t(t-s)^{H-\frac{1}{2}}F\left(H-\frac{1}{2}, \frac{1}{2}-H, H+\frac{1}{2}, 1-\frac{t}{s}\right)dW(s),
\end{gather}
where $F$ is the Gauss hypergeometric function. Another representation in \cite{pt01} using fractional integrals might be useful sometimes, which we choose to omit here. 

One can show that $(B_H(t+h)-B_H(t))/h$ converges in distribution (i.e. under the topology of the dual of $C_c^{\infty}(0,\infty)$) to $\dot{B}_H(t)$ where the dot represents distributional time derivative. We check that
\begin{multline}
  \lim_{h\to 0^+, h_1\to 0}\mathbb{E}\left(\frac{B_H(h)}{h}\frac{B_H(t+h_1)-B_H(t)}{h_1}\right)\\
  =\lim_{h\to 0^+, h_1\to 0}
  \frac{1}{2hh_1}\bigl((t+h_1)^{2H}-(t+h_1-h)^{2H}-t^{2H}+(t-h)^{2H}\bigr) \\
  =H(2H-1) t^{2H-2}.
\end{multline}

If we pick the initial time in \eqref{eq:gneralizedLangevin} as $t_0=0$ 
and consider the random noises corresponding to fractional Brownian motion as discussed:
\begin{gather}
R_H(t)=\frac{\sqrt{kT\gamma_0}}{\sqrt{H(2H-1)\Gamma(2H-1)}} \dot{B}_H(t),
\end{gather} 
where $\gamma_0$ is a constant representing the typical scale of friction, we then have the GLE model 
\begin{gather}\label{eq:inertial}
m\dot{v}=-\nabla V(x)-\frac{\gamma_0}{\Gamma(2H-1)}\int_0^t(t-s)^{2H-2}v(s)\,ds+R_H(t)
\end{gather}
following the `fluctuation-dissipation theorem'. 

We will assume throughout the paper that
\begin{gather}
H\in \left(\frac{1}{2}, 1\right),
\end{gather}  
as they are the physically most realistic regimes \cite{kou08} and consequently $2-2H\in (0,1)$. 

\subsection{Over-damped limit and the FSDE model}

Assume that we consider the fractional diffusion regime with time scale $T_{t}$,  the length scale $L=\sqrt{kT/\gamma_0}T_t^{1-H}$, and velocity scale $L/T_t=\sqrt{kT/\gamma_0}T_t^{-H}$. We then scale the energy with $kT$, fractional Brownian motion with $T_t^{H}$ and scale the noise with $\sqrt{kT\gamma_0}T_t^{H-1}$. The dimensionless GLE reads
\begin{gather*}
\frac{mT_t^{2H}}{\gamma_0}\dot{v}=-\nabla V-\frac{1}{\Gamma(2H-1)}
\int_0^t(t-s)^{2H-2}v(s)ds+R_H(t)
\end{gather*}

In the regimes where $mT_t^{2H}/\gamma_0$ is small (viscous environment, particle is small etc), the $m\dot{v}$ term in \eqref{eq:inertial} can be neglected, and we have the following dimensionless over-damped equation with fractional noise:
\begin{gather}\label{eq:formalfsde}
\frac{1}{\Gamma(2H-1)}\int_0^t(t-s)^{2H-2}v(s)\,ds=-\nabla V(x)+R_H(t).
\end{gather}

In the following discussion, we will always assume the equations are dimensionless  while the variables $k$ and $T$ might be used to denote other quantities.

Recall that the Caputo derivative (\cite{gm97,kst06}) starting from $t=0$ for a $C^1$
function is given by
\begin{gather}\label{eq:caputo}
D_c^{\alpha}w=\frac{1}{\Gamma(1-\alpha)}\int_0^t\frac{\dot{w}(s)}{(t-s)^{\alpha}}\,ds.
\end{gather}
Note that $v(s)=\dot{x}(s)$. The left hand side of Equation
\eqref{eq:formalfsde} formally becomes the Caputo derivative of $x$
with $\alpha=2-2H$ and the equation becomes a fractional SDE:
\begin{gather}
D_c^{2-2H}x=-\nabla V(x)+R_H(t).
\end{gather}
This means that the power-law memory kernel yields the Caputo derivative of the trajectory naturally. This over-damped fractional SDE model is simpler compared with the GLE model \eqref{eq:inertial} and we expect it to contain the essential physics (the subdiffusion and `fluctuation-dissipation theorem') as we will study. 

From here on, we will only consider 1D case ($x\in \mathbb{R}$) for
convenience while the general dimension is similar.  The above
discussion then motivates us to consider the fractional stochastic
differential equation (FSDE) where we relax the constraint between $H$
and $\alpha$:
\begin{gather}\label{eq:fsde1}
D_c^{\alpha}x=-V'(x)+C_H\dot{B}_H, 
\end{gather}
where
\begin{gather}
C_H=\frac{1}{\sqrt{H(2H-1)\Gamma(1-\alpha)}}
\end{gather}
for $\alpha\in (1-H, 1]$. The index obtained from the `fluctuation-dissipation theorem' is denoted as $\alpha^*=2-2H$. We will also denote the (one-sided) kernel associated with the Caputo derivative as 
\begin{gather}
\gamma(t)=\frac{\theta(t)}{\Gamma(1-\alpha)}t^{-\alpha},
\end{gather}
where $\theta(t)$ is the standard Heaviside step function.

In \cite{liliu_prep}, a definition of the Caputo derivative based on a convolution group was proposed, which agrees with \eqref{eq:caputo} when the function is absolutely continuous on $(0,t)$. The observation of the underlying convolution group structure allows us to de-convolve and change the Caputo derivative to integral form as
\begin{multline}\label{eq:fsde2}
x(t)=x(0)+\frac{1}{\Gamma(\alpha)}\int_0^t(t-s)^{\gamma-1} D_c^{\gamma}x (s)\,ds\\
=x(0)-\frac{1}{\Gamma(\alpha)}\int_0^t(t-s)^{\alpha-1}V'(x(s))\,ds+\frac{C_H}{\Gamma(\alpha)}\int_0^t(t-s)^{\alpha-1}dB_{H},
\end{multline}
where we formally used $R_H\,ds=C_H\dot{B}_H \,ds=C_H\,dB_H$. This integral will then be understood as the rigorous definition of the FSDE \eqref{eq:fsde1}. The last term in \eqref{eq:fsde2} is an integral with respect to fractional Brownian motion, which we will make the meaning precise later.  We will study FSDE  \eqref{eq:fsde2} and try to understand the role of  the `fluctuation-dissipation theorem'.  For convenience, we denote
\begin{gather}\label{eq:proG}
G(t)=\frac{C_H}{\Gamma(\alpha)}\int_0^t(t-s)^{\alpha-1}dB_H(s)=\int_0^{\infty}f_t(s)\,dB_H(s) ,
\end{gather}
where $f_t(s)=\frac{C_H}{\Gamma(\alpha)}((t-s)^+)^{\alpha-1}$ and
$\alpha\in (1-H, 1)$. We shall study the process $G$ in Section \ref{sec:processg}.

\section{The process $G$ as a stochastic integral}\label{sec:processg}

To make the meaning of the FSDE precise, we must understand the
process $G$. In this section, we first review the stochastic integrals
with respect to fractional Brownian motions and then study some
properties of $G$.

\subsection{Stochastical integrals driven by fractional Brownian motions}

The stochastic integrals with respect to fractional Brownian motions have been thoroughly discussed in literature \cite{zahle98,mn00,du99,dhp00}. In \cite{zahle98,mn00}, the stochastic integrals are defined pathwise using the Riemann-Stieltjes integrals by making use of certain properties of the paths.  In \cite{du99,dhp00}, the so-called Malliavin calculus is used to define the stochastic integrals (Wick-Ito-Skorohod integrals, or the `divergence') and the Ito's formula is established, which connects both definitions. For a review, one can refer to \cite{nualart06,bhoz08}.  
In the case that the integrand is deterministic, those two definitions agree. By \eqref{eq:fsde2}, we only need the integrals of deterministic processes with respect to fractional Brownian motion. We shall give a brief introduction to the theory for deterministic processes and the readers can turn to the references listed here for general processes. 

Let us fix $T>0$ and define the stochastic integrals on the interval $[0, T]$. The definition of integration of deterministic processes on $[0, T]$ starts with the step functions. Let $\mathscr{E}$ be the set of all step functions on $[0, T]$, i.e. 
$\varphi\in\mathscr{E}$ is given by 
\begin{gather}
\varphi=\sum_{j=1}^m a_j 1_{(t_{j-1}, t_j]}(t),
\end{gather}
where $1_{E}(t)$ is the characteristic function of set $E$. The integral $B^H(\varphi)$ is defined by 
\begin{gather}
B^H(\varphi)=\int_0^T \varphi \, dB_H(t)=\sum_{j=1}^m a_j\Big(B_H(t_j)-B_H(t_{j-1})\Big).
\end{gather}
Consider the inner product
\begin{gather}\label{eq:inner1}
\langle \varphi_1,\varphi_2\rangle_{\mathscr{H}}=\mathbb{E}(B^H(\varphi_1)B^H(\varphi_2)).
\end{gather}
It is easily verified that $\forall \varphi_1, \varphi_2\in\mathscr{E}$, 
\begin{multline}\label{eq:inner}
\langle \varphi_1,\varphi_2\rangle_{\mathscr{H}}=H(2H-1)\int_0^T\int_0^T |r-u|^{2H-2}\varphi_1(r)\varphi_2(u)\,du dr\\
=\frac{\pi \kappa(2\kappa+1)}{\Gamma(1-2\kappa)\sin(\pi\kappa)}\int_0^Ts^{-2\kappa}(I^{\kappa}u^{\kappa}f)(s)(I^{\kappa}u^{\kappa}g)(s)\,ds,
\end{multline}
where $\kappa=H-\frac{1}{2}$ and $I^{\kappa}$ is the right Riemann-Liouville fractional calculus, given by (\cite{pt01}):
\begin{equation*}
(I^{\kappa}f)(s)=
\begin{cases} \displaystyle
\frac{1}{\Gamma(\kappa)}\int_s^{T}f(u)(u-s)^{\kappa-1}du, & \kappa>0, \\
\displaystyle
-\frac{1}{\Gamma(1-\kappa)}\frac{d}{ds}\int_s^Tf(u)(u-s)^{-\kappa}du. & \kappa<0.
\end{cases}
\end{equation*}

This then motivates the definition of
\begin{gather}
\mathscr{H}_0=\Big\{\varphi\in L_{loc}^1[0, T]: \int_0^T\int_0^T|r-u|^{2H-2}|\varphi(r)||\varphi(u)|dr du<\infty\Big\}
\end{gather}
and 
\begin{gather}
\Lambda=\left\{f\in L_{loc}^1[0,T]: \int_0^Ts^{-2\kappa}(I^{\kappa}u^{\kappa}f)^2(s)\,ds<\infty\right\}.
\end{gather}
Clearly, $\mathscr{H}_0\subset\Lambda$. The integral $B^H(\varphi)$ can then be defined for $\varphi\in \Lambda$ by approximating them with step functions. In \cite{pt00,pt01}, it is shown that both inner product spaces $\mathscr{H}_0$ and $\Lambda$ are not complete and therefore not Hilbert spaces. However, the space $B^H(\mathscr{E})$ clearly has a closure in $L^2(\Omega, P)$. This means some elements in the closure corresponds to distributions that are not in $L^1_{loc}[0,T]$. Let $\mathscr{H}$ be the space of the closure of $\mathscr{E}$ under the inner product \eqref{eq:inner1} and thus $\mathscr{H}$ contains some distributions. $\forall \varphi_1,\varphi_2\in \mathscr{H}_0\subset \mathscr{H}$,
\begin{gather}\label{eq:innerh}
\langle \varphi_1,\varphi_2\rangle_{\mathscr{H}}=\mathbb{E}(B^H(\varphi_1)B^H(\varphi_2))=H(2H-1)\int_0^T\int_0^T |r-u|^{2H-2}\varphi_1(r)\varphi_2(u)\,du dr.
\end{gather}
The following lemma provides a convenient way to check that some deterministic processes can be integrated by fractional Brownian motion (\cite{mmv01,nualart06}):
\begin{lemma}\label{lmm:embeddingH}
If $H>1/2$ and $\varphi\in L^{1/H}([0, T])$, then there exists $b_H>0$ such that
\begin{gather}
\|\varphi\|_{\mathscr{H}_0}\le b_H\|\varphi\|_{L^{1/H}[0, T]}.
\end{gather}
where 
\[
\|\varphi\|_{\mathscr{H}_0}^2=\int_0^T\int_0^T|r-u|^{2H-2}|\varphi(r)||\varphi(u)|\,drdr.
\]
\end{lemma}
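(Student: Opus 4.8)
The plan is to recognize $\|\varphi\|_{\mathscr{H}_0}^2$ as a Riesz-potential double integral and invoke the Hardy--Littlewood--Sobolev (HLS) inequality on $\mathbb{R}$. First I would extend $\varphi$ by zero outside $[0,T]$; then, abbreviating $\lambda:=2-2H$, one has
\[
\|\varphi\|_{\mathscr{H}_0}^2=\int_{\mathbb{R}}\int_{\mathbb{R}}\frac{|\varphi(r)|\,|\varphi(u)|}{|r-u|^{\lambda}}\,dr\,du,
\qquad \|\varphi\|_{L^{1/H}(\mathbb{R})}=\|\varphi\|_{L^{1/H}[0,T]}.
\]
Because $H\in(1/2,1)$ we have $\lambda\in(0,1)$, so in dimension $n=1$ the kernel $|x|^{-\lambda}$ is an admissible Riesz kernel (locally integrable), and the exponent $p=q=1/H$ lies in $(1,2)$, i.e.\ strictly inside the range where HLS applies.

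Next I would verify the homogeneity condition of HLS: in dimension $n=1$ one needs $\tfrac1p+\tfrac1q+\tfrac{\lambda}{n}=2$, i.e.\ $\tfrac2p+(2-2H)=2$, which indeed singles out $p=1/H$. Applying HLS with $f=g=|\varphi|$ then produces a constant $C_H>0$, depending only on $H$, with $\|\varphi\|_{\mathscr{H}_0}^2\le C_H\,\|\varphi\|_{L^{1/H}[0,T]}^2$; setting $b_H:=\sqrt{C_H}$ gives the stated bound. Equivalently, one can run H\"older's inequality together with the weak Young inequality for convolution against $|x|^{-\lambda}\in L^{1/\lambda,\infty}(\mathbb{R})$, which maps $L^{1/H}$ into $L^{1/(1-H)}$; this goes through the same arithmetic.

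I do not expect a genuine obstacle here: the only care needed is the exponent bookkeeping and checking that the strict inclusion $H\in(1/2,1)$ keeps us away from the degenerate endpoints, namely $\lambda=0$ ($H=1$, constant kernel, trivial) and $\lambda=1$ ($H=1/2$, the borderline $L^1$ case where HLS fails — and indeed where $B_H$ reduces to ordinary Brownian motion). For $H$ strictly between these, the constant $b_H$ is finite. It is worth remarking that this embedding is classical and appears in \cite{mmv01,nualart06}; the argument above is recorded only for completeness.
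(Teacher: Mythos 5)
Your proof is correct: the exponent bookkeeping for Hardy--Littlewood--Sobolev ($\tfrac1p+\tfrac1q+\lambda=2$ with $p=q=1/H$, $\lambda=2-2H\in(0,1)$, $n=1$) is exactly right, and the resulting constant depends only on $H$, uniformly in $T$, as the statement requires. The paper itself states this lemma without proof, citing \cite{mmv01,nualart06}, and the argument in those references is essentially the same HLS embedding you give, so your proposal matches the intended proof.
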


\subsection{Some basic properties of $G$}
We can easily verify that $f_t\in L^{1/H}[0, T]$ whenever $t\le T$, and hence the integral on $[0, T]$ is well defined. Further, for any $T_1>t, T_2>t$, the integral of $f_t$ over $[0, T_1]$ and $[0, T_2]$ agree on $[0, \min(T_1, T_2)]$. In this sense, the integral $\int_0^{\infty}f_t(s)dB_H(s)$ can then be understood as in $[0, T]$ for any $T>t$.

Roughly speaking, since $B_H$ is $H-\epsilon$ H\"older continuous for any $\epsilon\in (0, H)$, $G(t)$ should be like $\alpha+H-1-\epsilon$ H\"older continuous for any $\epsilon\in (0, \alpha+H-1)$ by the regularity of $B_H$. 
We shall make this precise in this subsection. 
\begin{lemma}
$G(t)$ is a Gaussian process with mean zero and covariance given by 
\begin{multline}
\phi(t_1, t_2)=\mathbb{E}(G(t_1)G(t_2))=\frac{B(2H-1,\alpha)}{B(\alpha,1-\alpha)\Gamma(\alpha)} \times \\
 \int_0^{\min(t_1, t_2)}dr\Big((t_1-r)^{\alpha-1}(t_2-r)^{2H-2+\alpha}
+(t_2-r)^{\alpha-1}(t_1-r)^{2H-2+\alpha}\Big).
\end{multline}
In particular, if $\alpha=\alpha^*$, $G(t)\myeq \beta_H B_{1-H}$ where $\myeq$ means they have the same distribution, and 
\begin{gather}
\beta_H=\frac{\sqrt{2}}{\sqrt{\Gamma(3-2H)}}.
\end{gather}
In other words, $G(t)$ is a fractional Brownian motion with Hurst parameter $1-H$ up to a constant $\beta_{H}$ if $\alpha=\alpha^*$.
\end{lemma}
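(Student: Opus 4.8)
The plan is to recognize $G(t)=B^H(f_t)$ as a member of the Gaussian space generated by $B_H$, read off that it is centered Gaussian, and then compute its covariance from \eqref{eq:innerh} and an elementary Beta‑function identity, specializing at the end to $\alpha=\alpha^*$. For the Gaussian part: by Lemma \ref{lmm:embeddingH} we have $f_t\in L^{1/H}[0,T]\subset\mathscr{H}_0\subset\mathscr{H}$ for every $T\ge t$ (the integrability of $((t-s)^+)^{\alpha-1}$ in $L^{1/H}$ is exactly the standing assumption $\alpha>1-H$), so $G(t)=B^H(f_t)$ is well defined and lies in the $L^2(\Omega,P)$‑closure of $\{B^H(\varphi):\varphi\in\mathscr{E}\}$. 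Each $B^H(\varphi)$ with $\varphi\in\mathscr{E}$ is a centered Gaussian variable, an $L^2$‑limit of centered Gaussians is centered Gaussian, and any finite linear combination $\sum_i c_i G(t_i)=B^H\!\bigl(\sum_i c_i f_{t_i}\bigr)$ again lies in this closure; hence $(G(t_1),\dots,G(t_d))$ is jointly Gaussian with mean zero, i.e.\ $G$ is a centered Gaussian process whose law is determined by $\phi(t_1,t_2)=\mathbb{E}(G(t_1)G(t_2))=\langle f_{t_1},f_{t_2}\rangle_{\mathscr{H}}$.

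Next I would compute $\phi$. Since $f_{t_i}\in\mathscr{H}_0$, the explicit formula \eqref{eq:innerh} applies and yields
\[
\phi(t_1,t_2)=H(2H-1)\,\frac{C_H^2}{\Gamma(\alpha)^2}\int_0^{t_1}\!\!\int_0^{t_2}|r-u|^{2H-2}(t_1-r)^{\alpha-1}(t_2-u)^{\alpha-1}\,du\,dr.
\]
Assuming $t_1\le t_2$ without loss of generality and noting the integrand is nonnegative, by Tonelli I split the domain along $r=u$ and integrate out the inner variable in each piece with $\int_a^b(b-x)^{p-1}(x-a)^{q-1}\,dx=B(p,q)(b-a)^{p+q-1}$, $p,q>0$, which is applicable here because $2H-1>0$, $\alpha>0$, and $\alpha+2H-2>H-1>-1$. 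The region $\{r>u\}$ contributes $B(2H-1,\alpha)\int_0^{t_1}(t_1-u)^{2H+\alpha-2}(t_2-u)^{\alpha-1}\,du$, the region $\{r<u\}$ contributes $B(2H-1,\alpha)\int_0^{t_1}(t_1-r)^{\alpha-1}(t_2-r)^{2H+\alpha-2}\,dr$, and since $\min(t_1,t_2)=t_1$ their sum is exactly the bracketed integral in the statement. What remains is bookkeeping of constants: using $B(\alpha,1-\alpha)=\Gamma(\alpha)\Gamma(1-\alpha)$ and $C_H^2=(H(2H-1)\Gamma(1-\alpha))^{-1}$ one checks $H(2H-1)\,C_H^2\,\Gamma(\alpha)^{-2}\,B(2H-1,\alpha)=\dfrac{B(2H-1,\alpha)}{B(\alpha,1-\alpha)\,\Gamma(\alpha)}$, which is the prefactor claimed.

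For $\alpha=\alpha^*=2-2H$ we have $2H+\alpha-2=0$, so the factors $(t_i-r)^{2H-2+\alpha}$ equal $1$ and the bracket collapses to $(t_1-r)^{\alpha-1}+(t_2-r)^{\alpha-1}$; integrating over $[0,t_1]$ gives $\frac{1}{\alpha}\bigl(t_1^{\alpha}+t_2^{\alpha}-|t_1-t_2|^{\alpha}\bigr)$. Since $1-\alpha=2H-1$ we get $B(\alpha,1-\alpha)=B(2H-1,\alpha)$, so the two Beta factors cancel and, using $\alpha\Gamma(\alpha)=\Gamma(\alpha+1)=\Gamma(3-2H)$ and $2(1-H)=\alpha$,
\[
\phi(t_1,t_2)=\frac{1}{\alpha\Gamma(\alpha)}\bigl(t_1^{2-2H}+t_2^{2-2H}-|t_1-t_2|^{2-2H}\bigr)=\frac{2}{\Gamma(3-2H)}\,R_{1-H}(t_1,t_2).
\]
Thus $G$ and $\beta_H B_{1-H}$ are both centered Gaussian processes with the same covariance once $\beta_H^2=2/\Gamma(3-2H)$, i.e.\ $\beta_H=\sqrt{2}/\sqrt{\Gamma(3-2H)}$; having the same finite‑dimensional distributions, they satisfy $G\myeq\beta_H B_{1-H}$.

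I expect the only genuine obstacle to be the constant‑chasing in the covariance step — one must track $H(2H-1)$, $C_H^2$, two Beta functions and powers of $\Gamma(\alpha)$ simultaneously, then invoke the reflection‑type identity $B(\alpha,1-\alpha)=\Gamma(\alpha)\Gamma(1-\alpha)$ and, in the special case, the coincidence $1-\alpha^*=2H-1$. Conceptually this is routine, but a dropped factor is easy; a minor supporting point worth stating explicitly is that \eqref{eq:innerh} (rather than only the abstract inner product on $\mathscr{H}$) is available here precisely because $f_t\in\mathscr{H}_0$, which is exactly what Lemma \ref{lmm:embeddingH} guarantees.
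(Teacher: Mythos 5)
Your proposal is correct and follows essentially the same route as the paper: apply the isometry \eqref{eq:innerh} to $\phi(t_1,t_2)=\langle f_{t_1},f_{t_2}\rangle_{\mathscr{H}}$, split the double integral along the diagonal, evaluate each piece with the Beta-function identity, and specialize to $\alpha=\alpha^*$ where the exponent $2H+\alpha-2$ vanishes and the covariance collapses to that of $\beta_H B_{1-H}$. Your additional care in justifying Gaussianity via the $L^2$-closure and the applicability of \eqref{eq:innerh} through $f_t\in L^{1/H}\subset\mathscr{H}_0$ is a slightly more explicit version of what the paper states briefly, not a different argument.
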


\begin{proof}
Clearly, $G(t)$ is a Gaussian process with mean zero because any linear operation of Gaussian process is again Gaussian. 

Without loss of generality, we can assume $t_2\ge t_1\ge 0$. The covariance can be computed using the isometry \eqref{eq:innerh}
\begin{multline*}
\mathbb{E}(G(t_1)G(t_2))=\langle f_{t_1}, f_{t_2}\rangle_{\mathscr{H}}=\\
\frac{1}{\Gamma(\alpha)B(\alpha,1-\alpha)}\int_0^{t_1}\int_0^{t_2}|r-u|^{2H-2}(t_1-r)^{\alpha-1}(t_2-u)^{\alpha-1}dudr.
\end{multline*}
We break the integral into two parts $I_1+I_2$, where 
\[
I_1=\frac{1}{\Gamma(\alpha)B(\alpha,1-\alpha)}\iint_{u\ge r}\ldots dudr, ~I_2=\frac{1}{\Gamma(\alpha)B(\alpha,1-\alpha)}\iint_{r\ge u}\ldots dudr.
\] 
By explicit computation,
\begin{multline*}
I_1=\frac{1}{\Gamma(\alpha)B(\alpha,1-\alpha)}\int_0^{t_1}dr(t_1-r)^{\alpha-1}\int_r^{t_2} du(u-r)^{2H-2}(t_2-u)^{\alpha-1}\\
=\frac{B(2H-1, \alpha)}{\Gamma(\alpha)B(\alpha,1-\alpha)}\int_0^{t_1} (t_1-r)^{\alpha-1}(t_2-r)^{2H-2+\alpha}dr.
\end{multline*}
This can further be written in terms of the so-called hypergeometric functions but we choose not to do it.
Similarly, 
\begin{multline*}
I_2=\frac{1}{\Gamma(\alpha)B(\alpha,1-\alpha)}\int_0^{t_1}du(t_2-u)^{\alpha-1}
\int_u^{t_1}dr(r-u)^{2H-2}(t_1-r)^{\alpha-1}\\
=\frac{B(2H-1, \alpha)}{\Gamma(\alpha)B(\alpha,1-\alpha)}\int_0^{t_1}(t_2-u)^{\alpha-1}(t_1-u)^{2H-2+\alpha} du.
\end{multline*}
If $\alpha=\alpha^*=2-2H$, the integrals $I_1$ and $I_2$ can be evaluated exactly:
\begin{gather*}
I_1+I_2=\frac{1}{\Gamma(3-2H)}\left(t_1^{2-2H}+t_2^{2-2H}-(t_2-t_1)^{2-2H}\right),
\end{gather*}
which shows the last claim.
\end{proof}

The above computation shows trivially that 
\begin{corollary}
In the case $V(x)$ is a constant, the solution of FSDE $D_c^{2-2H}x=R_H(t)$ satisfies $\var(x(t))\propto t^{2-2H}$. In other words, we have subdiffusion.
\end{corollary}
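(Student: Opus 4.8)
The plan is to specialize the integral equation \eqref{eq:fsde2} to the force-free case and then read off the variance from the lemma just proved. When $V(x)$ is constant we have $V'\equiv 0$, so the drift term in \eqref{eq:fsde2} vanishes identically and the strong solution is simply
\begin{gather*}
x(t)=x(0)+G(t),
\end{gather*}
where $G$ is the Gaussian process \eqref{eq:proG} with $\alpha=\alpha^*=2-2H$. Assuming $x(0)$ is deterministic (or, more generally, independent of $B_H$), this gives $\var(x(t))=\var(G(t))=\phi(t,t)$, so the whole statement reduces to evaluating $\phi$ on the diagonal.

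Next I would invoke the identity established in the lemma: for $\alpha=\alpha^*$ one has $G(t)\myeq \beta_H B_{1-H}(t)$, hence $\var(G(t))=\beta_H^2\,\mathbb{E}\bigl(B_{1-H}(t)^2\bigr)=\beta_H^2\,t^{2(1-H)}=\beta_H^2\,t^{2-2H}$ by the basic covariance formula \eqref{fb:cov} at $s=t$. Equivalently, one can substitute $t_1=t_2=t$ directly into the closed-form expression for $\phi$: the two bracketed terms coincide and the $r$-integral collapses to $\int_0^t(t-r)^{2\alpha+2H-3}\,dr=\int_0^t(t-r)^{1-2H}\,dr=\frac{t^{2-2H}}{2-2H}$, which is finite precisely because $1-2H>-1$ for $H\in(1/2,1)$; combined with the prefactors this again yields $\var(x(t))=\beta_H^2\,t^{2-2H}$. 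Either route gives $\var(x(t))\propto t^{2-2H}$.

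Finally, since $H\in(1/2,1)$ the exponent satisfies $2-2H\in(0,1)$, so the mean-square displacement grows strictly sublinearly in $t$; by definition this is subdiffusive behavior, in contrast with the linear growth $\var\propto t$ of ordinary Brownian diffusion. There is no real obstacle here: the corollary is an immediate consequence of the explicit covariance computed in the lemma, and the only point that deserves to be stated is the determinism (or independence from the noise) of $x(0)$ used to pass from $\var(x(t))$ to $\var(G(t))$.
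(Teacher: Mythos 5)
Your proof is correct and follows essentially the same route as the paper: the corollary is stated there as an immediate consequence of the lemma's covariance computation, namely that for $\alpha=\alpha^*=2-2H$ the process $G$ is $\beta_H B_{1-H}$ in distribution, so $\var(x(t))=\var(G(t))=\beta_H^2 t^{2-2H}$. Your explicit diagonal evaluation of $\phi(t,t)$ and the remark about $x(0)$ being deterministic (or independent of the noise) are fine, just more detail than the paper spells out.
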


\begin{remark}
This agrees with the Langevin model in \cite[Theorem 2.2]{kou08}, though the author was discussing the case with mass.
Further, this corollary shows that the solution to our model usually is a fractional Brownian motion with a Hurst parameter $1-H\in (0, 1/2)$, and this agrees with the data analysis in \cite{kouxie04,mlckx05,mwbk09}, implying that our model makes physical sense.
\end{remark}

\begin{proposition}\label{pro:contofg}
There exists $C>0$ such that 
$\mathbb{E}|G(t_2)-G(t_1)|^2\le C|t_2-t_1|^{2H+2\alpha-2}$ and therefore $G(t)$ is $H+\alpha-1-\epsilon$ H\"older continuous for any $\epsilon>0$.
\end{proposition}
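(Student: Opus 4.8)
The plan is to use that, since $f_{t_2},f_{t_1}\in L^{1/H}[0,T]$, the difference $G(t_2)-G(t_1)=\int_0^{\infty}\bigl(f_{t_2}(s)-f_{t_1}(s)\bigr)\,dB_H(s)$ is a centered Gaussian random variable, so by the isometry \eqref{eq:innerh} one has $\mathbb{E}|G(t_2)-G(t_1)|^2=\|f_{t_2}-f_{t_1}\|_{\mathscr{H}}^2$, and then to bound this $\mathscr{H}$-norm via Lemma \ref{lmm:embeddingH}. Assume without loss of generality that $0\le t_1<t_2\le T$ and set $\delta=t_2-t_1$. Since $\langle\varphi,\varphi\rangle_{\mathscr{H}}\le H(2H-1)\|\varphi\|_{\mathscr{H}_0}^2\le b_H^2H(2H-1)\|\varphi\|_{L^{1/H}[0,T]}^2$ for $\varphi=f_{t_2}-f_{t_1}$, it suffices to prove $\|f_{t_2}-f_{t_1}\|_{L^{1/H}[0,T]}\le C\,\delta^{\,H+\alpha-1}$ with $C$ depending only on $H$ and $\alpha$. (One could alternatively read the bound off from the explicit covariance $\phi$ of the previous lemma by estimating $\phi(t_1,t_1)+\phi(t_2,t_2)-2\phi(t_1,t_2)$, but the $L^{1/H}$ route is cleaner.)

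For the $L^{1/H}$ estimate I would write $f_{t_2}(s)-f_{t_1}(s)=\frac{C_H}{\Gamma(\alpha)}\bigl(((t_2-s)^+)^{\alpha-1}-((t_1-s)^+)^{\alpha-1}\bigr)$ and split the domain at $s=t_1$. On $[t_1,t_2]$ only the first term survives, and the change of variable $u=t_2-s$ turns $\int_{t_1}^{t_2}(t_2-s)^{(\alpha-1)/H}\,ds$ into $\int_0^{\delta}u^{(\alpha-1)/H}\,du$, which is finite exactly because $\alpha>1-H$ forces $(\alpha-1)/H>-1$, and equals a constant times $\delta^{1+(\alpha-1)/H}=\delta^{(H+\alpha-1)/H}$. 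On $[0,t_1]$, with $u=t_1-s$ the integrand is $|u^{\alpha-1}-(u+\delta)^{\alpha-1}|^{1/H}$; I split once more at $u=\delta$. For $u\le\delta$, since $\alpha-1<0$ we have $0\le u^{\alpha-1}-(u+\delta)^{\alpha-1}\le u^{\alpha-1}$, so this part contributes at most $\int_0^{\delta}u^{(\alpha-1)/H}\,du$, again of order $\delta^{(H+\alpha-1)/H}$. For $u\ge\delta$ the mean value theorem gives $u^{\alpha-1}-(u+\delta)^{\alpha-1}\le(1-\alpha)\,u^{\alpha-2}\,\delta$, and since $\alpha<1<2-H$ we have $(\alpha-2)/H<-1$, so $\int_{\delta}^{\infty}u^{(\alpha-2)/H}\,du$ converges to a constant times $\delta^{1+(\alpha-2)/H}$; multiplying by the factor $\delta^{1/H}$ from the mean-value bound gives order $\delta^{(H+\alpha-1)/H}$ as well. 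Collecting the three pieces, $\|f_{t_2}-f_{t_1}\|_{L^{1/H}[0,T]}^{1/H}\le C'\delta^{(H+\alpha-1)/H}$, hence $\|f_{t_2}-f_{t_1}\|_{L^{1/H}[0,T]}\le C''\delta^{\,H+\alpha-1}$, and the bound $\mathbb{E}|G(t_2)-G(t_1)|^2\le C|t_2-t_1|^{2H+2\alpha-2}$ follows.

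For the H\"older regularity, since $G$ is a centered Gaussian process, for every positive integer $k$ one has $\mathbb{E}|G(t_2)-G(t_1)|^{2k}=c_k\bigl(\mathbb{E}|G(t_2)-G(t_1)|^2\bigr)^k\le C_k|t_2-t_1|^{k(2H+2\alpha-2)}$, so the Kolmogorov--Chentsov continuity theorem produces a modification of $G$ that is $\beta$-H\"older continuous for every $\beta<\frac{k(2H+2\alpha-2)-1}{2k}=(H+\alpha-1)-\frac{1}{2k}$; letting $k\to\infty$ gives H\"older continuity of order $H+\alpha-1-\epsilon$ for every $\epsilon>0$. The only real work is the bookkeeping of the three integral pieces, and the point I expect to be the main obstacle is verifying that each is bounded by an integral over a fixed interval (or all of $(0,\infty)$) independent of $t_1$ and $T$, so that $C$ is uniform; it is precisely the two structural constraints $\alpha>1-H$ and $\alpha<1$ that make the near-$t_2$ singularity and the tail at infinity, respectively, integrable with the correct power of $\delta$.
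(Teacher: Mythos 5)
Your proof is correct, but it takes a genuinely different route from the paper's. The paper works directly with the explicit covariance $\phi(t_1,t_2)$ from the preceding lemma and estimates $\phi(t_2,t_2)+\phi(t_1,t_1)-2\phi(t_1,t_2)$ by hand, splitting into the two cases $H+\alpha-3/2\le 0$ (where it uses $a+b\ge 2\sqrt{ab}$) and $H+\alpha-3/2>0$ (where it writes the difference of powers as an integral, regularizes with an $\epsilon$, and passes to the limit). You instead use the isometry $\mathbb{E}|G(t_2)-G(t_1)|^2=\|f_{t_2}-f_{t_1}\|_{\mathscr{H}}^2$ together with the positivity of the kernel $|r-u|^{2H-2}$ and the embedding Lemma \ref{lmm:embeddingH}, reducing everything to a one-dimensional $L^{1/H}$ estimate of the kernel difference; your three-piece computation of that norm is correct (the splits at $s=t_1$ and $u=\delta$, with $\alpha>1-H$ controlling the singularity near $t_2$ and $\alpha<1<2-H$ controlling the tail, and extending the tail integral to $\infty$ makes the constant uniform in $t_1$ and $T$). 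Your route buys a cleaner argument with no case distinction and a constant manifestly depending only on $H$ and $\alpha$; the paper's route buys self-containedness, recycling the covariance formula it has already computed rather than invoking the $L^{1/H}\hookrightarrow\mathscr{H}_0$ embedding. Your final Kolmogorov--Chentsov step, spelling out the Gaussian $2k$-th moments and letting $k\to\infty$, reaches the same H\"older exponent the paper asserts, just with the standard details (and the caveat about passing to a continuous modification) made explicit.
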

\begin{proof}
\[
\mathbb{E}|G(t_2)-G(t_1)|^2=\phi(t_2,t_2)+\phi(t_1, t_1)-2\phi(t_1, t_2).
\]
To be notationally convenient, let us define 
\[
\varphi(s, t)=\frac{B(\alpha,1-\alpha)\Gamma(\alpha)}{B(2H-1,\alpha)}\phi(s,t).
\]
Without loss of generality, we assume $t_2\ge t_1$. Applying $a+b\ge 2\sqrt{ab}$ whenever $a\ge 0, b\ge0$, we have
\[
\varphi(t_1, t_2)\ge 2\int_0^{t_1}(t_2-r)^{H+\alpha-3/2}(t_1-r)^{H+\alpha-3/2}dr
\]
If $H+\alpha-3/2\le 0$, then, 
\[
\varphi(t_1, t_2)\ge
2\int_0^{t_1}(t_2-r)^{2H+2\alpha-3}dr
=\frac{2}{2H+2\alpha-2}(t_2^{2H+2\alpha-2}-(t_2-t_1)^{2H+2\alpha-2}).
\]
Hence, 
\begin{multline*}
\mathbb{E}|G(t_2)-G(t_1)|^2\le C_1\Big(t_1^{2H+2\alpha-2}-t_2^{2H+2\alpha-2} \\
+2(t_2-t_1)^{2H+2\alpha-2}\Big)\le 2C_1(t_2-t_1)^{2H+2\alpha-2},
\end{multline*}
since $0<2H+2\alpha-2\le 1$, $t_1^{2H+2\alpha-2}-t_2^{2H+2\alpha-2}\le 0$.

If $H+\alpha-3/2> 0$, then 
\begin{multline*}
\varphi(t_2, t_2)+\varphi(t_1, t_1)-2\varphi(t_1, t_2)
=\int_{t_1}^{t_2}(t_2-r)^{2H+2\alpha-3}dr+\\
+\int_0^{t_1}((t_2-r)^{H+\alpha-3/2}-(t_1-r)^{H+\alpha-3/2})^2 dr.
\end{multline*}
The first integral is easily seen to be bounded by $C|t_2-t_1|^{2H+2\alpha-2}$ for some constant $C$. For the second term, we have:
\begin{gather*}
\left((t_2-r)^{H+\alpha-\frac{3}{2}}-(t_1-r)^{H+\alpha-\frac{3}{2}}\right)^2
=\left(H+\alpha-\frac{3}{2}\right)^2\left(\int_{t_1}^{t_2}(s-r)^{H+\alpha-\frac{5}{2}}ds\right)^2.
\end{gather*}
Let $I_{\epsilon}=(\int_{t_1}^{t_2}(s-r+\epsilon)^{H+\alpha-5/2}ds)^2$ with $r\le t_1$. Then, 
\begin{multline*}
\int_0^{t_1}I_{\epsilon}dr
\le (t_2-t_1)\int_0^{t_1}\int_{t_1}^{t_2}(s-r+\epsilon)^{2H+2\alpha-5}dsdr=(t_2-t_1)\int_{t_1}^{t_2}\int_0^{t_1}\ldots drds\\
=\frac{(t_2-t_1)}{|2H+2\alpha-4|(2H+2\alpha-3)}\Big((t_2-t_1+\epsilon)^{2H+2\alpha-3}-\epsilon^{2H+2\alpha-3}\\
-(s+\epsilon)^{2H+2\alpha-3}|_{t_1}^{t_2}\Big) \le C_{\alpha,H}(t_2-t_1)(t_2-t_1+\epsilon)^{2H+2\alpha-3}.
\end{multline*}
Note that $2H+2\alpha-4<0$. Taking $\epsilon\to 0$ shows that the second term is bounded by $C(t_2-t_1)^{2H+2\alpha-2}$. 

The Kolmogorov continuity criteria shows that $G(t)$ is $H+\alpha-1-\epsilon$ H\"older continuous for any $\epsilon\in (0, H+\alpha-1)$ almost surely, ending the proof.
\end{proof}

\begin{lemma}\label{lmm:groupforG}
Let $\{g_{\alpha}\}$ be the convolution group in \cite{liliu_prep}. In particular, for $\alpha> -1$
\begin{gather*}
g_{\alpha}=\displaystyle
\begin{cases}
\frac{\theta(t)}{\Gamma(\alpha)}t^{\alpha-1},& \alpha>0,\\
\delta(t), &\alpha=0,\\
\frac{1}{\Gamma(1+\alpha)}D\left(\theta(t)t^{\alpha}\right), & \alpha\in (-1, 0).
\end{cases}
\end{gather*} 
Here, $\theta(t)$ is the Heaviside step function while $D$ means the distributional derivative with respect to $t$. Let $\alpha_1\in (1-H, 1)$ and $\alpha_2+\alpha_1\in (1-H, 1)$.
Then,  it holds that $g_{\alpha_2}*G_{\alpha_1}=G_{\alpha_1+\alpha_2}$.
\end{lemma}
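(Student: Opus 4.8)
\emph{Strategy.} The plan is to prove the identity first when $\alpha_2>0$, where $g_{\alpha_2}(t)=\theta(t)t^{\alpha_2-1}/\Gamma(\alpha_2)$ is an ordinary locally integrable function and the convolution reads as a Riemann--Liouville fractional integral of the continuous path $G_{\alpha_1}$, and then to bootstrap to arbitrary $\alpha_2$ (in particular $\alpha_2\le 0$, where $g_{\alpha_2}$ is a genuine distribution) via associativity of the convolution group $\{g_\alpha\}$ of \cite{liliu_prep}. Throughout write $G_\alpha(t)=\int_0^\infty f_t^{(\alpha)}(s)\,dB_H(s)$ with $f_t^{(\alpha)}(s)=\frac{C_H}{\Gamma(\alpha)}((t-s)^+)^{\alpha-1}$ as in \eqref{eq:proG}, the constant $C_H$ being held fixed (independent of the index). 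For $\alpha_2>0$ the convolution is $(g_{\alpha_2}*G_{\alpha_1})(t)=\frac{1}{\Gamma(\alpha_2)}\int_0^t(t-r)^{\alpha_2-1}G_{\alpha_1}(r)\,dr$, which is almost surely finite since $G_{\alpha_1}$ is almost surely continuous by Proposition \ref{pro:contofg}.

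\emph{The case $\alpha_2>0$.} The core step is a stochastic Fubini theorem allowing the interchange
\[
\frac{1}{\Gamma(\alpha_2)}\int_0^t(t-r)^{\alpha_2-1}\Big(\int_0^\infty f_r^{(\alpha_1)}(s)\,dB_H(s)\Big)dr
=\int_0^\infty\Big(\frac{1}{\Gamma(\alpha_2)}\int_s^t(t-r)^{\alpha_2-1}f_r^{(\alpha_1)}(s)\,dr\Big)dB_H(s).
\]
To justify it one verifies joint measurability of $(r,s)\mapsto(t-r)^{\alpha_2-1}f_r^{(\alpha_1)}(s)$ and the integrability bound $\int_0^t(t-r)^{\alpha_2-1}\|f_r^{(\alpha_1)}\|_{\mathscr{H}}\,dr<\infty$; since $f_r^{(\alpha_1)}\ge 0$, Lemma \ref{lmm:embeddingH} bounds this by $C\int_0^t(t-r)^{\alpha_2-1}\|f_r^{(\alpha_1)}\|_{L^{1/H}[0,r]}\,dr$, and the elementary computation $\|f_r^{(\alpha_1)}\|_{L^{1/H}[0,r]}=C'r^{H+\alpha_1-1}$ shows finiteness holds because $\alpha_2>0$ and $\alpha_1>1-H$. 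The inner kernel is then evaluated by the Beta integral $\int_s^t(t-r)^{\alpha_2-1}(r-s)^{\alpha_1-1}\,dr=B(\alpha_1,\alpha_2)(t-s)^{\alpha_1+\alpha_2-1}$ (equivalently the semigroup property $I_{0+}^{\alpha_2}I_{0+}^{\alpha_1}=I_{0+}^{\alpha_1+\alpha_2}$ of Riemann--Liouville integrals), which after cancelling $B(\alpha_1,\alpha_2)=\Gamma(\alpha_1)\Gamma(\alpha_2)/\Gamma(\alpha_1+\alpha_2)$ is exactly $f_t^{(\alpha_1+\alpha_2)}(s)$. The hypothesis $\alpha_1+\alpha_2>1-H$ enters precisely here, to guarantee $f_t^{(\alpha_1+\alpha_2)}\in L^{1/H}[0,t]$ so that the right-hand side of the displayed identity is a legitimate integral against $B_H$. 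Hence $(g_{\alpha_2}*G_{\alpha_1})(t)=\int_0^\infty f_t^{(\alpha_1+\alpha_2)}(s)\,dB_H(s)=G_{\alpha_1+\alpha_2}(t)$ almost surely.

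\emph{General $\alpha_2$.} The case $\alpha_2=0$ is the trivial identity $\delta*G_{\alpha_1}=G_{\alpha_1}$. For $\alpha_2<0$, apply the case already proved with base index $\alpha_1+\alpha_2$ and positive exponent $-\alpha_2$ --- admissible because $\alpha_1+\alpha_2\in(1-H,1)$ and $(\alpha_1+\alpha_2)+(-\alpha_2)=\alpha_1\in(1-H,1)$ --- to get $g_{-\alpha_2}*G_{\alpha_1+\alpha_2}=G_{\alpha_1}$. Left-convolving with $g_{\alpha_2}$ and using the group law $g_{\alpha_2}*g_{-\alpha_2}=g_0=\delta$ together with associativity of $\{g_\alpha\}$ from \cite{liliu_prep} yields $g_{\alpha_2}*G_{\alpha_1}=(g_{\alpha_2}*g_{-\alpha_2})*G_{\alpha_1+\alpha_2}=G_{\alpha_1+\alpha_2}$. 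What must be checked is only that left-convolution of the pathwise identity by the distribution $g_{\alpha_2}$ is well defined and that associativity applies; this is immediate, since $G_{\alpha_1}$ and $G_{\alpha_1+\alpha_2}$ are almost surely continuous causal processes, hence tempered distributions supported in $[0,\infty)$ lying in the class on which the group $\{g_\alpha\}$ acts in \cite{liliu_prep}.

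\emph{Main obstacle.} The delicate point is the stochastic Fubini step: one must state and verify a Fubini-type theorem for Wiener integrals with respect to fractional Brownian motion (joint measurability plus the $\mathscr{H}$-norm integrability above), as opposed to the merely formal interchange of integrals. The remaining ingredients --- the Beta/semigroup computation and the passage to $\alpha_2\le 0$ by associativity --- are routine or are supplied directly by the convolution-group framework of \cite{liliu_prep}. As a consistency check, both $g_{\alpha_2}*G_{\alpha_1}$ and $G_{\alpha_1+\alpha_2}$ are mean-zero Gaussian, so the identity can also be confirmed at the level of covariances through the isometry \eqref{eq:innerh}, though that only gives equality in law rather than the pathwise statement.
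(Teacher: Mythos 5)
Your argument is correct, but it follows a genuinely different route from the paper. The paper's proof is purely pathwise and very short: it identifies $G_{\alpha}$ with $g_{\alpha}*\dot B_H$ (up to $C_H$), mollifies the fBm path to $B_H^{\epsilon}=B_H*\eta_{\epsilon}$, applies the deterministic convolution-group identity of \cite{liliu_prep} to the smooth path, i.e.\ $g_{\alpha_2}*(g_{\alpha_1}*\frac{d}{dt}B_H^{\epsilon})=g_{\alpha_1+\alpha_2}*\frac{d}{dt}B_H^{\epsilon}$, and then lets $\epsilon\to 0$ using the H\"older continuity of $B_H$; no case distinction on the sign of $\alpha_2$ and no stochastic Fubini theorem is needed, though the convergence of the mollified convolutions is left implicit. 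You instead work directly at the level of the Wiener-type integral: for $\alpha_2>0$ a stochastic Fubini interchange plus the Beta/semigroup identity turns $g_{\alpha_2}*G_{\alpha_1}$ into the integral of $f_t^{(\alpha_1+\alpha_2)}$ against $dB_H$, and for $\alpha_2<0$ you bootstrap through the group law $g_{\alpha_2}*g_{-\alpha_2}=\delta$ and associativity. This buys an explicit identification of the kernel and makes visible exactly where the hypotheses $\alpha_1,\ \alpha_1+\alpha_2\in(1-H,1)$ enter ($L^{1/H}$ membership of the kernels), at the cost of invoking a Fubini theorem for fBm integrals; for deterministic integrands this is standard, and the $\mathscr{H}$-norm integrability you check (via Lemma \ref{lmm:embeddingH} and $\|f_r^{(\alpha_1)}\|_{L^{1/H}}\propto r^{H+\alpha_1-1}$) is the right hypothesis, though you should state the Fubini result you use rather than only flag it. Two small remarks: the Fubini step gives equality a.s.\ for each fixed $t$, so you should add that both sides are continuous in $t$ to upgrade to a process identity; and in the $\alpha_2<0$ step temperedness is not really what matters --- associativity holds because all three factors are supported in $[0,\infty)$, i.e.\ you are working in the causal convolution algebra, which is exactly the setting of \cite{liliu_prep}.
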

\begin{proof}
  It suffices to look at a continuous path of $B_H$. For such a path, we can
  mollify to $B_H^{\e}=B_H*\eta_{\e}$ where
  $\eta_{\epsilon}=\frac{1}{\epsilon}\eta(\frac{t}{\epsilon})$ with $\eta\in C_c^{\infty}(-\infty, 0)$, $0\le\eta\le1$ and $\int_{-\infty}^{\infty} \eta dt=1$. Then, $g_{\alpha_2}*(g_{\alpha_1}*\frac{d}{dt}B_{H}^{\e})=g_{\alpha_1+\alpha_2}*\frac{d}{dt}B_H^{\e}$ by \cite{liliu_prep}. Taking $\epsilon\to 0$ and using the H\"older continuity of $B_H$, we arrive at the conclusion.
\end{proof}

\section{Existence of the strong solutions}\label{sec:exisuniq}

For the discussion on existence of solutions of a class of SDEs driven by fractional Brownian motion, one may refer to \cite{no02,hairer05}. Our FSDEs are different from those studied in \cite{no02,hairer05}, as we have both the Caputo derivatives and fractional Brownian motions.

Mathematically, for fractional differential equations with Caputo derivative of order $\alpha\in (0,1)$, we only need to specify the initial value at one point $t=0$ (\cite{df02,diethelm10,liliu_prep}). For our fractional SDE, this is clear from Equation \eqref{eq:fsde2}. Intuitively, the system is activated at $t=0$ and one starts to count the memory effect from $t=0$. For better understanding, our model is the over-damped  case of the generalized Langevin equation, which is derived from  Kac-Zwanzig model (see \cite{gks04,kou08}). In the Kac-Zwanzig model, specifying the value at $t=0$ is enough. Hence, to make the FSDE solvable, we only need to specify the data at $t=0$.

We first define the so-called strong solution:
\begin{definition}\label{def:fsdestrongsoln}
Given a probability space $(\Omega, \mathcal{F}, P)$ and a random variable $x_0$ on this space, suppose $B_H$ is a fractional Brownian motion over this space, which may be coupled to $x_0$. A Strong solution of the fractional stochastic differential equation \eqref{eq:fsde1} with initial condition $x_0$ on the interval $[0, T)$ ($T>0$) is a process $x(t)$ that is continuous and adapted to the filtration $(\mathcal{G}_t)$ with $\mathcal{G}_t=\cap_{s>t} (\sigma(B_H(\tau), 0\le\tau\le s)\cup\sigma(x_0))$, $\forall t\in [0, T)$, satisfying 

(1) $P(x(0)=x_0)=1$. 

(2) With probability one, we have $\forall t\in [0, T)$, Equation \eqref{eq:fsde2} holds.
\end{definition}

It is standard to prove that the strong solution exists and is unique given the initial data since we have shown that process $G$ is continuous. We state the theorem and put the proof in the appendix for a reference:
\begin{theorem}\label{thm:existenceuniqueness}
Let $H>1/2$ and $\alpha\in(1-H, 1)$. Assume that $V'(\cdot)$ is Lipschitz continuous. Then, there exists a unique strong solution on $[0,\infty)$ to the FSDE \eqref{eq:fsde1} for a given fractional Brownian motion and initial distribution in the sense of Definition \ref{def:fsdestrongsoln}.
\end{theorem}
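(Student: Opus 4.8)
The plan is to solve the integral equation~\eqref{eq:fsde2} pathwise, for each fixed $T>0$, by a Picard/contraction argument on $C([0,T])$, and then to check that the resulting process is continuous and adapted so that it is a strong solution in the sense of Definition~\ref{def:fsdestrongsoln}; since~\eqref{eq:fsde1} has been \emph{defined} via~\eqref{eq:fsde2}, no separate equivalence between the two forms is needed. To set this up, fix $T>0$ and restrict to a full-measure event on which $t\mapsto G(t)$ is continuous on $[0,T]$, which exists by Proposition~\ref{pro:contofg}, and on $C([0,T])$ define
\[
(\Phi x)(t)=x_0+G(t)-\frac{1}{\Gamma(\alpha)}\int_0^t(t-s)^{\alpha-1}V'(x(s))\,ds.
\]
Because $\alpha-1\in(-1,0)$ the kernel $(t-s)^{\alpha-1}$ is integrable on $[0,t]$, and since $V'$ is continuous, $s\mapsto V'(x(s))$ is bounded on $[0,T]$ for $x\in C([0,T])$; a routine dominated-convergence estimate then gives $\Phi x\in C([0,T])$, so $\Phi$ maps $C([0,T])$ into itself.

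Next I would establish the contraction. Writing $L$ for the Lipschitz constant of $V'$ and $I^{\alpha}$ for the Riemann--Liouville fractional integral, for $x,y\in C([0,T])$ one has the pointwise bound $|(\Phi x)(t)-(\Phi y)(t)|\le L\,(I^{\alpha}|x-y|)(t)$. Iterating this and using the semigroup identity $(I^{\alpha})^{n}c=c\,t^{n\alpha}/\Gamma(n\alpha+1)$ for a constant $c$, together with the nonnegativity of the kernel, yields
\[
\|\Phi^{n}x-\Phi^{n}y\|_{C([0,T])}\le \frac{L^{n}T^{n\alpha}}{\Gamma(n\alpha+1)}\,\|x-y\|_{C([0,T])}.
\]
Since $\sum_{n}L^{n}T^{n\alpha}/\Gamma(n\alpha+1)=E_{\alpha}(LT^{\alpha})<\infty$, the coefficient is $<1$ for $n$ large, so $\Phi^{n}$ is a contraction and $\Phi$ has a unique fixed point $x\in C([0,T])$, the unique continuous solution of~\eqref{eq:fsde2} on $[0,T]$. (Equivalently one may use a Bielecki-type weighted sup norm tailored to the kernel.) As $T$ is arbitrary and solutions on $[0,T_{1}]$ and $[0,T_{2}]$ agree on $[0,\min(T_{1},T_{2})]$ by uniqueness, one obtains a unique continuous solution on $[0,\infty)$.

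To handle adaptedness I would run the Picard scheme from the adapted initial guess $x^{(0)}(t)\equiv x_{0}$ and set $x^{(n+1)}=\Phi x^{(n)}$. Here $x_{0}$ is $\mathcal{G}_{0}$-measurable; $G(t)$ is $\mathcal{G}_{t}$-measurable, being an $L^{2}$-limit of linear combinations of increments $B_{H}(t_{j})-B_{H}(t_{j-1})$ with $t_{j}\le t$ (its defining integrand $f_t$ is supported in $[0,t]$); and the integral of a continuous $(\mathcal{G}_{t})$-adapted process against the deterministic kernel $(t-s)^{\alpha-1}$ is again $(\mathcal{G}_{t})$-adapted. By induction every $x^{(n)}$ is continuous and $(\mathcal{G}_{t})$-adapted, and the iterates converge uniformly on compacts to the fixed point $x$, so $x$ is continuous, $(\mathcal{G}_{t})$-adapted, and satisfies $x(0)=x_{0}$; hence $x$ is the unique strong solution.

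The main obstacle is essentially bookkeeping with the weakly singular kernel: the naive sup-norm contraction estimate on $[0,T]$ has constant $LT^{\alpha}/\Gamma(\alpha+1)$, which need not be $<1$ for large $T$, so one must pass to iterates of $\Phi$ and exploit the semigroup property of fractional integration (or use a weighted norm). A secondary point requiring care is that the pathwise construction should preserve measurability in $\omega$; choosing the Picard iteration with $x^{(0)}\equiv x_{0}$ takes care of this automatically, leaving only the standard verifications above.
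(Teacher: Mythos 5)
Your proposal is correct and follows essentially the same route as the paper's appendix proof: a pathwise Picard iteration on each $[0,T]$ using the continuity of $G$ (Proposition \ref{pro:contofg}) and the semigroup property $g_\alpha^{*n}=g_{n\alpha}$ to get the factor $L^nT^{n\alpha}/\Gamma(n\alpha+1)\to 0$, followed by the same iterated estimate for uniqueness and exhaustion over $T$. Your explicit verification of adaptedness of the iterates (and hence of the limit) is a point the paper leaves implicit, but it does not change the argument.
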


If $V'(x)$ is only locally Lipschitz, we probably need $V$ to be confining, or in other words, $\lim_{|x|\to\infty} V(x)=\infty$ and $e^{-\beta V(x)}\in L^1(\mathbb{R})$ for any $\beta>0$ for the global existence of the solution. We are not going to pursue this issue any further in this work.

\section{Asymptotic analysis}\label{sec:asym}

In this section, we discuss the ergodicity and convergence to equilibrium satisfying Gibbs measure. This is important for a physical system. When the force is linear, we show rigorously that the process is ergodic and converges algebraically to the Gibbs measure when the `fluctuation-dissipation theorem' is satisfied (see Theorem \ref{thm:linear}).  When the force is nonlinear, we believe this problem must be solved by rewriting the FSDE model into Markovian processes and we propose two possible approaches for this.

\subsection{Linear force case}
Consider that $V'(x)=kx$ for some $k>0$. By Theorem \ref{thm:existenceuniqueness}, the solution exists and is unique.  
In this section, we will show rigorously for the linear force case that our model indeed has physical meaning. 

For the convenience,  we introduce the function
\begin{gather}
e_{\alpha,k}(t)=E_{\alpha}(-k t^{\alpha}),
\end{gather}
where
\begin{gather}\label{eq:mlseries}
E_{\alpha}(z)=\sum_{n=0}^{\infty}\frac{z^n}{\Gamma(n\alpha+1)}
\end{gather}
is the Mittag-Leffler function. Note that $e_{\alpha,k}$ solves the equation 
\[
D_c^{\alpha}e_{\alpha,k}=-ke_{\alpha,k},~ e_{\alpha,k}(0)=1.
\]
 See for example \cite{mpg07}.

We can now state the result for asymptotic behaviors:
\begin{theorem}\label{thm:linear}
Let $V=\frac{1}{2}kx^2$. As $t\to\infty$, the solution $x(t)$ to the FSDE \eqref{eq:fsde1} converges in distribution to a normal distribution, i.e., $x(t)$ tends to a stationary Gaussian process: $x_{\infty}(t)$. The covariance $h(\tau)=\mathbb{E}(x_{\infty}(t)x_{\infty}(t+\tau))$ of this stationary process satisfies
\begin{gather}
\mathcal{F}(h(\tau))=\frac{2\Gamma(2H+1)\sin(H\pi)}{\Gamma(1-\alpha)}\frac{|\omega|^{1-2H}}{|(i\omega)^{\alpha}+k|^2},
\end{gather}
where $\mathcal{F}(\cdot)$ is the Fourier transform operator for tempered distributions.
If $\alpha=\alpha^*$ so that the `fluctuation-dissipation theorem' is satisfied, the covariance is given exactly by
\begin{gather}
h(\tau)=\frac{1}{k}e_{\alpha,k}(\tau).
\end{gather}
In particular,  $x_{\infty}(t)$ satisfies the Gibbs measure
\[
\mu(dx)\sim \exp\left(-\frac{1}{2}kx^2\right)dx.
\]
\end{theorem}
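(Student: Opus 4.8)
The plan is to solve the linear FSDE in closed form via the Mittag--Leffler resolvent, to show that the contribution of the initial datum decays, and then to identify the limiting stationary Gaussian process through its spectral density.

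First, with $V'(x)=kx$ I would rewrite \eqref{eq:fsde2} in the convolution form $x+k\,g_\alpha*x=x(0)+G$, where $g_\alpha(t)=\theta(t)t^{\alpha-1}/\Gamma(\alpha)$ is the kernel from Lemma~\ref{lmm:groupforG} and $G$ is the process \eqref{eq:proG}. Set $q_k(t)=t^{\alpha-1}E_{\alpha,\alpha}(-kt^\alpha)$; taking Laplace transforms one checks $\widehat{q_k}(\lambda)=(\lambda^\alpha+k)^{-1}$, whence the resolvent identity $q_k+k\,g_\alpha*q_k=g_\alpha$, and also $q_k=-k^{-1}e_{\alpha,k}'$ so that $k\int_0^t q_k(s)\,ds=1-e_{\alpha,k}(t)$. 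Deconvolving $(I+k\,g_\alpha*)$ by $(I-k\,q_k*)$ yields the candidate
\[
x(t)=x(0)\,e_{\alpha,k}(t)+C_H\int_0^t q_k(t-s)\,dB_H(s),
\]
in which the Wiener integral is well defined because $q_k(t-\cdot)\in L^{1/H}[0,t]$ (near $s=t$ it behaves like $(t-s)^{\alpha-1}$ and $(\alpha-1)/H>-1$ since $\alpha>1-H$), so that Lemma~\ref{lmm:embeddingH} applies. I would then verify that this process satisfies \eqref{eq:fsde2} using a stochastic Fubini argument for the stochastic term together with the identities $g_\alpha-k\,g_\alpha*q_k=q_k$ and $e_{\alpha,k}+k\,g_\alpha*e_{\alpha,k}=1$; uniqueness of strong solutions (Theorem~\ref{thm:existenceuniqueness}) then forces $x$ to equal this process.

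Second, since $\alpha\in(0,1)$ the Mittag--Leffler function obeys $e_{\alpha,k}(t)=E_\alpha(-kt^\alpha)\to0$ as $t\to\infty$ (in fact $e_{\alpha,k}(t)\sim(k\Gamma(1-\alpha))^{-1}t^{-\alpha}$), so the first term disappears and it remains to study the centered Gaussian process $X(t):=C_H\int_0^t q_k(t-s)\,dB_H(s)$. By the isometry \eqref{eq:innerh},
\[
\mathbb{E}\bigl(X(t)X(t+\tau)\bigr)=C_H^2H(2H-1)\int_0^{t}\!\int_0^{t+\tau}|r-u|^{2H-2}q_k(t-r)q_k(t+\tau-u)\,du\,dr ,
\]
and after the substitution $r\mapsto t-r$, $u\mapsto t+\tau-u$ the integrand becomes $|u-r-\tau|^{2H-2}q_k(r)q_k(u)$ on $[0,t]\times[0,t+\tau]$. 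Letting $t\to\infty$ by dominated convergence --- using the algebraic decay $q_k(t)=O(t^{-1-\alpha})$ from the Mittag--Leffler asymptotics, together with $2H-2\in(-1,0)$, to produce an integrable majorant on $(0,\infty)^2$ --- gives a limit $h(\tau)=C_H^2H(2H-1)\iint_{(0,\infty)^2}|u-r-\tau|^{2H-2}q_k(r)q_k(u)\,du\,dr$ depending only on $\tau$. The same argument applied to $\mathbb{E}(X(t+s_i)X(t+s_j))$ shows that every finite-dimensional covariance converges to $h(s_i-s_j)$, so by Gaussianity the shifted processes converge in finite-dimensional distributions to the stationary centered Gaussian process $x_\infty$ with covariance $h$; in particular $x(t)$ converges in distribution to the normal law $N(0,h(0))$.

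Third, to evaluate $h$ I would view $X$ as the linear filter with transfer function $C_H/((i\omega)^\alpha+k)$ --- the Fourier symbol of the operator $D_c^\alpha+k$, consistent with $\widehat{q_k}(i\omega)=((i\omega)^\alpha+k)^{-1}$ --- applied to the fractional noise $\dot B_H$, whose generalized spectral density is a constant multiple of $|\omega|^{1-2H}$ (equivalently, $\mathcal F(H(2H-1)|\tau|^{2H-2})(\omega)$ is a constant times $|\omega|^{1-2H}$). Multiplying the two spectral densities and carrying the constant $C_H^2=(H(2H-1)\Gamma(1-\alpha))^{-1}$ through gives the claimed formula for $\mathcal F(h)$. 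For $\alpha=\alpha^*=2-2H$ one has $1-2H=\alpha-1$ and $\sin(\pi H)=\sin(\pi\alpha/2)$; computing the Fourier transform of the even function $\tau\mapsto e_{\alpha,k}(|\tau|)$ as $2\operatorname{Re}\widehat{e_{\alpha,k}}(i\omega)$ with $\widehat{e_{\alpha,k}}(\lambda)=\lambda^{\alpha-1}/(\lambda^\alpha+k)$ gives exactly $\tfrac1k\mathcal F(e_{\alpha,k})(\omega)=2\sin(\pi\alpha/2)|\omega|^{\alpha-1}/|(i\omega)^\alpha+k|^2$, which matches $\mathcal F(h)$; by injectivity of the Fourier transform on tempered distributions, $h(\tau)=\tfrac1k e_{\alpha,k}(\tau)$. (Equivalently, the form of $\mathcal F(h)$ shows that $h$ solves $D_c^\alpha h=-kh$ for $\tau>0$ with $h(0)=1/k$, which again forces $h=\tfrac1k e_{\alpha,k}$.) Finally $h(0)=1/k=\mathbb{E}(x_\infty(t)^2)$, so the one-time marginal of the mean-zero Gaussian process $x_\infty$ is $N(0,1/k)$, i.e.\ the Gibbs measure $\mu(dx)\propto e^{-kx^2/2}\,dx$.

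I expect the principal obstacle to be the passage to the limit in the double covariance integral: one must exhibit a majorant integrable on $(0,\infty)^2$ that simultaneously absorbs the $|u-r-\tau|^{2H-2}$ singularity near $u=r+\tau$, the mild singularities of $q_k$ near the origin, and the behavior as $r,u\to\infty$, where only the decay $q_k(t)=O(t^{-1-\alpha})$ is available and it must dominate the growth permitted by $2H-2>-1$. A secondary point is to make the spectral-density manipulation rigorous inside the isometry framework of Section~\ref{sec:processg} rather than merely formally; the remaining ingredients --- Mittag--Leffler asymptotics and Laplace-transform identities, stochastic Fubini for Wiener integrals, and Gaussianity of Wiener integrals --- are standard.
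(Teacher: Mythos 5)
Your proposal is correct and follows essentially the same route as the paper: the same Mittag--Leffler representation $x(t)=x_0e_{\alpha,k}(t)+\frac{C_H}{k}\int_0^t\bigl(-\dot{e}_{\alpha,k}(t-s)\bigr)\,dB_H(s)$ (your $q_k=-\dot{e}_{\alpha,k}/k$) obtained via Laplace/resolvent identities and a stochastic Fubini step, decay of the initial-data term, the covariance limit through the isometry \eqref{eq:innerh} and the $t^{-1-\alpha}$ decay of $\dot{e}_{\alpha,k}$, the spectral factorization $\mathcal{F}(h)=|(i\omega)^{\alpha}+k|^{-2}\times(\text{noise spectrum})$, and the identification $h=\frac{1}{k}e_{\alpha,k}$ at $\alpha=\alpha^*$ by matching Fourier transforms. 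The integrability issue you single out is exactly what the paper settles with Lemmas \ref{lmm:embeddingH}, \ref{lmm:mlfund} and \ref{lmm:sigmaconv}: since $r=-\dot{e}_{\alpha,k}\ge 0$ lies in $L^{1/H}(0,\infty)$, the full double integral is finite by the $\mathscr{H}_0$-embedding, and nonnegativity of the integrand lets the truncated covariance integrals converge monotonically, so your dominated-convergence majorant is a fillable detail rather than a gap.
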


To prove this theorem, our approach is to find out the exact formulas for the solutions:
\begin{multline}\label{eq:solnlinear}
x(t)=x_0e_{\alpha,k}(t)+ \biggl( G(t)+\int_0^t G(t-s)\dot{e}_{\alpha,k}(s)\,ds \biggr)\\
=x_0e_{\alpha,k}(t)-\frac{C_H}{k}\int_0^t \dot{e}_{\alpha,k}(t-\tau)\,dB_H(\tau)
=: X_1 + X_2.
\end{multline}
Recall again that the dot means derivative on time.  We will analyze the asymptotic behaviors of this stochastic process to conclude our claim.

Before we give the proof, we remark that the variance of the first term is $\sim t^{-2\alpha}$ while the variance of the second term increases to the stationary variance with
rate $t^{2H-2-\alpha}$. The loss of the variance of the first term can be balanced by the gain of the second term only if
$-2\alpha=2H-2-\alpha$ or $\alpha=\alpha^*$.  If $\alpha$ is too small, then, the effect of initial data dampens slowly, or the
dissipation caused by viscosity is small, which cannot balance the fluctuation. If $\alpha$ is too big, then the effect of initial data
dampens too fast due to strong dissipation. Hence, the fluctuation-dissipation theorem must be satisfied to model a true
physical system so that there is balance.  We also remark that, as we have seen, even if there is no balance between fluctuation and dissipation, the whole process will still tends to a normal distribution, though it might not be the correct physical equilibrium.

We now move to the proof of Theorem \ref{thm:linear}. To do that, we prove several auxiliary lemmas. We first of all introduce a lemma regarding the behavior of $e_{\alpha,k}$:
\begin{lemma}\label{lmm:mlfund}
$e_{\alpha,k}$, the solution to the initial value problem
\[
D_c^{\alpha}e_{\alpha,k}=-ke_{\alpha,k},~ e_{\alpha,k}(0)=1,
\]
 is continuous on $[0,\infty)$ and smooth on $(0,\infty)$. As $t\to\infty$,
 \[
 e_{\alpha,k}= O(t^{-\alpha}).
 \]
 
The derivative is negative: $\dot{e}_{\alpha,k}(t)<0$. As $t\to 0^+$, $\dot{e}_{\alpha,k}(t)\sim Ct^{\alpha-1}$. Further, there exist $C_1>0, C_2>0$ such that for $t\ge 1$, 
\begin{gather}
C_1t^{-\alpha-1}\le |\dot{e}_{\alpha, k}(t)|\le C_2t^{-\alpha-1}.
\end{gather}
We have for $t>0$,
\begin{gather}\label{eq:ederiv}
\dot{e}_{\alpha,k}(t)=-k g_{\alpha}(t)- \frac{k}{\Gamma(\alpha)}\int_0^t(t-s)^{\alpha-1}\dot{e}_{\alpha,k}(s)\,ds.
\end{gather}
\end{lemma}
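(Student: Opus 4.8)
The plan is to read every assertion off the standard series and representation formulas for the Mittag--Leffler function: complete monotonicity handles the regularity and sign statements, the classical large-argument expansion gives the decay rates, and a direct differentiation of the integral (fixed-point) form of the defining equation yields \eqref{eq:ederiv}.

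First, regularity and sign. Since $E_\alpha$ is entire and $t\mapsto t^\alpha$ is continuous on $[0,\infty)$ and real-analytic on $(0,\infty)$, the composition $e_{\alpha,k}(t)=E_\alpha(-kt^\alpha)$ is continuous on $[0,\infty)$ and smooth (indeed real-analytic) on $(0,\infty)$. For $0<\alpha\le1$ the function $x\mapsto E_\alpha(-x)$ is completely monotone and $t\mapsto kt^\alpha$ is a Bernstein function, so $e_{\alpha,k}$ is completely monotone; in particular $e_{\alpha,k}>0$ and $\dot e_{\alpha,k}\le0$. To upgrade to $\dot e_{\alpha,k}(t)<0$ I use the series identity $\dot e_{\alpha,k}(t)=-kt^{\alpha-1}E_{\alpha,\alpha}(-kt^\alpha)$, where $E_{\alpha,\beta}(z)=\sum_{n\ge0}z^n/\Gamma(n\alpha+\beta)$, together with the classical fact that $x\mapsto E_{\alpha,\alpha}(-x)$ is completely monotone with value $1/\Gamma(\alpha)>0$ at $x=0$, hence strictly positive on $[0,\infty)$; alternatively, a zero of $\dot e_{\alpha,k}$ at some $t_0>0$ would, since $-\dot e_{\alpha,k}$ is itself completely monotone and therefore nonincreasing, force $\dot e_{\alpha,k}\equiv0$ on $[t_0,\infty)$ and then on all of $(0,\infty)$ by analyticity, contradicting $e_{\alpha,k}(t)\to0$.

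Next, the asymptotics. Differentiating the series \eqref{eq:mlseries} term by term (legitimate on compact subsets of $(0,\infty)$ since $E_\alpha$ is entire) gives $\dot e_{\alpha,k}(t)=\sum_{n\ge1}\frac{(-k)^n}{\Gamma(n\alpha)}t^{n\alpha-1}=-\frac{k}{\Gamma(\alpha)}t^{\alpha-1}+O(t^{2\alpha-1})$, which is the stated $\dot e_{\alpha,k}(t)\sim C t^{\alpha-1}$ as $t\to0^+$. For large $t$ I invoke the classical asymptotic expansion of the Mittag--Leffler function on the negative real axis (see \cite{mpg07}): $E_\alpha(-x)=\frac{1}{\Gamma(1-\alpha)}x^{-1}+O(x^{-2})$ as $x\to+\infty$, which gives $e_{\alpha,k}(t)=O(t^{-\alpha})$; and, since the $x^{-1}$-coefficient in the analogous expansion of $E_{\alpha,\alpha}$ is $1/\Gamma(0)=0$, one has $E_{\alpha,\alpha}(-x)=\frac{\alpha}{\Gamma(1-\alpha)}x^{-2}+O(x^{-3})$, hence $\dot e_{\alpha,k}(t)=-kt^{\alpha-1}E_{\alpha,\alpha}(-kt^\alpha)\sim -\frac{\alpha}{k\,\Gamma(1-\alpha)}t^{-\alpha-1}$ as $t\to\infty$. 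Combining this two-sided asymptotic with the fact that $\dot e_{\alpha,k}$ is continuous and strictly negative on each compact $[1,R]$ produces constants $C_1,C_2>0$ with $C_1t^{-\alpha-1}\le|\dot e_{\alpha,k}(t)|\le C_2t^{-\alpha-1}$ for $t\ge1$. Finally, for \eqref{eq:ederiv}: by the de-convolution principle of \cite{liliu_prep} (the same step turning \eqref{eq:fsde1} into \eqref{eq:fsde2}) the initial value problem for $e_{\alpha,k}$ is equivalent to $e_{\alpha,k}=1-k\,(g_\alpha*e_{\alpha,k})$, which can also be checked directly from the series using $g_\alpha*t^{n\alpha}=\frac{\Gamma(n\alpha+1)}{\Gamma(n\alpha+1+\alpha)}t^{n\alpha+\alpha}$. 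Writing $e_{\alpha,k}(s)=1+\int_0^s\dot e_{\alpha,k}(r)\,dr$ (valid as $\dot e_{\alpha,k}$ is locally integrable, being $O(r^{\alpha-1})$ near $0$) and applying Fubini yields $g_\alpha*e_{\alpha,k}=g_{\alpha+1}+g_{\alpha+1}*\dot e_{\alpha,k}$, where $g_{\alpha+1}(t)=t^\alpha/\Gamma(\alpha+1)$ is $C^1$ on $(0,\infty)$ and in $g_{\alpha+1}*\dot e_{\alpha,k}$ the only singularity of the integrand is the integrable $r^{\alpha-1}$ one at $r=0$, away from the differentiation endpoint. Then $\frac{d}{dt}g_{\alpha+1}=g_\alpha$ and $\frac{d}{dt}(g_{\alpha+1}*\dot e_{\alpha,k})=g_\alpha*\dot e_{\alpha,k}$ (the Leibniz boundary term vanishes because $\alpha>0$), so $\dot e_{\alpha,k}=-k\,g_\alpha-k\,(g_\alpha*\dot e_{\alpha,k})$, which is \eqref{eq:ederiv}.

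The main obstacle I anticipate is the sharp large-$t$ rate for $\dot e_{\alpha,k}$: the naive leading term of the expansion of $E_{\alpha,\alpha}(-x)$ vanishes, so one must keep the next order to recover both the correct power $t^{-\alpha-1}$ and a nonzero constant, which is exactly what supplies the lower bound; a secondary care point is confirming that the Leibniz boundary term in the differentiation leading to \eqref{eq:ederiv} genuinely drops out despite the endpoint singularities of the convolution kernels.
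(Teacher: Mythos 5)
Your proof is correct, and it follows the same overall skeleton as the paper (integral/convolution form of the equation for \eqref{eq:ederiv}, series expansion near $t=0$, Mittag--Leffler asymptotics at infinity), but several steps are executed by genuinely different means. For \eqref{eq:ederiv} the paper differentiates the identity $\theta(t)e_{\alpha,k}=\theta(t)\bigl(1-k\,g_\alpha*(\theta(t)e_{\alpha,k})\bigr)$ in the distributional sense, whereas you stay classical: write $e_{\alpha,k}(s)=1+\int_0^s\dot e_{\alpha,k}$, use Fubini to get $g_\alpha*e_{\alpha,k}=g_{\alpha+1}+g_{\alpha+1}*\dot e_{\alpha,k}$, and differentiate, checking that the Leibniz boundary term dies because $g_{\alpha+1}(0)=0$; the two routes are equivalent, yours being more pedestrian but self-contained. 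For the sign, the paper asserts strict decrease ``from the series,'' which is terse since the series is alternating; your argument via complete monotonicity of $E_\alpha(-x)$ composed with the Bernstein function $kt^\alpha$, plus the identity $\dot e_{\alpha,k}(t)=-kt^{\alpha-1}E_{\alpha,\alpha}(-kt^\alpha)$ and strict positivity of the completely monotone $E_{\alpha,\alpha}(-x)$, is a cleaner and fully justified substitute. For the two-sided bound $|\dot e_{\alpha,k}(t)|\asymp t^{-\alpha-1}$, the paper invokes Tauberian analysis of $\mathcal{L}(\dot e_{\alpha,k})=-k/(s^\alpha+k)$ ``or the asymptotic behavior of the Mittag--Leffler function directly''; you take the second option and, importantly, handle the real subtlety: the first-order coefficient $1/\Gamma(0)$ in the expansion of $E_{\alpha,\alpha}(-x)$ vanishes, so the sharp rate and the nonzero constant needed for the lower bound come from the second-order term $\alpha x^{-2}/\Gamma(1-\alpha)$, which you compute correctly ($\dot e_{\alpha,k}(t)\sim-\tfrac{\alpha}{k\Gamma(1-\alpha)}t^{-\alpha-1}$, consistent with formally differentiating $e_{\alpha,k}\sim\tfrac{1}{k\Gamma(1-\alpha)}t^{-\alpha}$). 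The only mild caveat is that you lean on standard but uncited facts (Pollard/Schneider complete monotonicity of $E_\alpha(-x)$ and $E_{\alpha,\alpha}(-x)$, and the large-argument expansion of $E_{\alpha,\beta}$); these are exactly the kind of classical Mittag--Leffler results the paper itself cites to \cite{mpg07}, so this is a matter of adding references rather than a gap.
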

\begin{proof}
The fact that $e_{\alpha,k}$ is the solution to the IVP is well-known (\cite{mpg07}). 
Denoting the Heaviside step function as $\theta(t)$ and $g_{\alpha}=\frac{\theta(t)}{\Gamma(\alpha)}t^{\alpha-1}$. Using the group technique and the inverse formula introduced in \cite{liliu_prep}, we find that 
\[
\theta(t)e_{\alpha,k}=\theta(t)\left(1+g_{\alpha}*(-k\theta(t)e_{\alpha,k})\right).
\]
Taking the distributional derivative on both sides, we find that
\[
\theta(t)\dot{e}_{\alpha,k}=-k g_{\alpha}-k g_{\alpha}*(\theta(t)\dot{e}_{\alpha,k}).
\]
Since all distributions are locally integrable, the convolution can be written as Lebesgue integral and we have the equality \eqref{eq:ederiv}.

By the series expansion of Mittag-Leffler functions (Eq. \eqref{eq:mlseries}), we find the local behavior of $\dot{e}_{\alpha, k}$ near $t=0$. From the series expansion, it is seen that $e_{\alpha,k}$ is strictly decreasing on $(0,\infty)$.  The asymptotic behavior at $t\to\infty$, is obtained by Tauberian analysis (\cite{widder41}) using the Laplace transforms of $\dot{e}_{\alpha,k}(t)$ and $\ddot{e}_{\alpha,k}(s)$ (noting the Laplace transform $\mathcal{L}(\dot{e}_{\alpha,k})=-\frac{k}{s^{\alpha}+k}$), or the asymptotic behavior of Mittag-Leffler function directly.
\end{proof}

\begin{lemma}
The solution to the FSDE \eqref{eq:fsde1} with $V'(x)=kx$ is given by \eqref{eq:solnlinear}.
\end{lemma}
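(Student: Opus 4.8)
The plan is to check directly that the process on the right of \eqref{eq:solnlinear} is a continuous adapted solution of the integral equation \eqref{eq:fsde2} with $V'(x)=kx$, and then to conclude by the uniqueness part of Theorem \ref{thm:existenceuniqueness}. Writing $*$ for the one-sided convolution on $[0,\infty)$ and using the kernel $g_\alpha$ from Lemma \ref{lmm:groupforG}, equation \eqref{eq:fsde2} with $V'(x)=kx$ becomes $x = x_0 - k\,(g_\alpha * x) + G$, where $G$ is as in \eqref{eq:proG}. Set $Y := G + G*\dot e_{\alpha,k}$, so that the candidate solution is $x = x_0\,e_{\alpha,k} + Y$. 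Since by Lemma \ref{lmm:mlfund} one has $\dot e_{\alpha,k}\in L^1_{\mathrm{loc}}[0,\infty)$ (it behaves like $C t^{\alpha-1}$ at $0$ and is $O(t^{-\alpha-1})$ at infinity) and $G$ is continuous by Proposition \ref{pro:contofg}, the convolution $G*\dot e_{\alpha,k}$ is a continuous process; it is $(\mathcal G_t)$-adapted because $\int_0^t G(t-s)\dot e_{\alpha,k}(s)\,ds=\int_0^t G(r)\dot e_{\alpha,k}(t-r)\,dr$ is a limit of Riemann sums of $\mathcal G_t$-measurable random variables. Hence $x$ is continuous and adapted and $x(0)=x_0$ a.s.

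Next I would verify that $x$ solves the equation. From the proof of Lemma \ref{lmm:mlfund} (the inverse-formula computation of \cite{liliu_prep}) one has $e_{\alpha,k}=1-k\,g_\alpha*e_{\alpha,k}$ on $[0,\infty)$, hence $x_0 e_{\alpha,k}=x_0-k\,g_\alpha*(x_0 e_{\alpha,k})$, so it remains to check $Y+k\,g_\alpha*Y=G$. Using commutativity and associativity of the one-sided convolution of $L^1_{\mathrm{loc}}$ functions,
$$Y+k\,g_\alpha*Y = G + G*\dot e_{\alpha,k} + k\,g_\alpha*G + k\,g_\alpha*G*\dot e_{\alpha,k} = G + G*\bigl(\dot e_{\alpha,k}+k\,g_\alpha+k\,g_\alpha*\dot e_{\alpha,k}\bigr),$$
and the bracket is identically zero by the identity \eqref{eq:ederiv}. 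Thus $x$ satisfies \eqref{eq:fsde2}, and by Theorem \ref{thm:existenceuniqueness} it is the strong solution; this gives the first line of \eqref{eq:solnlinear}. Equivalently, one may recognize $\dot e_{\alpha,k}=\sum_{n\ge 1}(-k)^n g_{n\alpha}$ as the resolvent kernel of $-k g_\alpha$ and solve the Volterra equation $x+k\,g_\alpha*x=x_0+G$ directly, obtaining the same answer.

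For the second equality in \eqref{eq:solnlinear}, I would apply a stochastic Fubini theorem to $G*\dot e_{\alpha,k}$. Since $f_t(s)=C_H g_\alpha(t-s)$ and all kernels are deterministic, this yields $(G*\dot e_{\alpha,k})(t)=C_H\int_0^t(g_\alpha*\dot e_{\alpha,k})(t-s)\,dB_H(s)$, so that $Y(t)=C_H\int_0^t\bigl(g_\alpha+g_\alpha*\dot e_{\alpha,k}\bigr)(t-s)\,dB_H(s)$. Rewriting \eqref{eq:ederiv} as $g_\alpha+g_\alpha*\dot e_{\alpha,k}=-\tfrac1k\,\dot e_{\alpha,k}$ and substituting gives $Y(t)=-\tfrac{C_H}{k}\int_0^t\dot e_{\alpha,k}(t-s)\,dB_H(s)$, as claimed.

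The main point requiring care is the stochastic Fubini step. Because the integrands are deterministic one can justify it in two ways: either by checking the relevant integrability of the double kernel via the isometry \eqref{eq:innerh} and Lemma \ref{lmm:embeddingH} — the only singularities, $(t-s)^{\alpha-1}$ near $s=t$ and $u^{\alpha-1}$ near $u=0$, are integrable because $\alpha\in(1-H,1)$ forces $\alpha+H-1>0$ and $\alpha>0$ — or, more cleanly, by first mollifying $B_H$ to a smooth path as in the proof of Lemma \ref{lmm:groupforG}, performing the ordinary Fubini theorem, and then passing to the limit using the $(H-\epsilon)$-Hölder continuity of $B_H$ together with the $L^2(\Omega)$-isometry for deterministic integrands. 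The remaining manipulations (continuity of the convolutions and associativity) are routine.
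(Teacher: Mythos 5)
Your argument is correct, and it differs from the paper's in the deterministic half of the proof. The paper \emph{derives} the first line of \eqref{eq:solnlinear} by working pathwise with the Laplace transform: it truncates $G$ (setting $\widetilde G(t)=G(T)$ for $t>T$ so the transform exists), solves the transformed Volterra equation, inverts, and lets $T\to\infty$; you instead \emph{verify} the candidate $x_0e_{\alpha,k}+G+G*\dot e_{\alpha,k}$ directly, using $e_{\alpha,k}=1-k\,g_\alpha*e_{\alpha,k}$ together with \eqref{eq:ederiv} in the form $\dot e_{\alpha,k}+k g_\alpha+k g_\alpha*\dot e_{\alpha,k}=0$, and then appeal to the pathwise uniqueness contained in Theorem \ref{thm:existenceuniqueness} (your resolvent-kernel remark, $\dot e_{\alpha,k}=\sum_{n\ge1}(-k)^n g_{n\alpha}$, is the constructive version of the same idea). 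Your route buys a shorter argument that avoids the truncation device and Laplace inversion, at the price of having to guess the formula and to check adaptedness/continuity so that uniqueness applies — which you do. The paper's route is constructive and produces the formula without prior knowledge. For the second equality in \eqref{eq:solnlinear} the two proofs coincide: both interchange the deterministic convolution with the $dB_H$ integral (the paper justifies this by mollifying the path exactly as you propose as your second option) and then apply \eqref{eq:ederiv} rewritten as $g_\alpha+g_\alpha*\dot e_{\alpha,k}=-\tfrac1k\,\dot e_{\alpha,k}$; your observation that $\alpha>1-H$ makes $\dot e_{\alpha,k}(t-\cdot)$ an admissible (e.g.\ $L^{1/H}$) integrand is the right integrability check for that step.
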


\begin{proof}
Since $G(t)$ is almost surely continuous, we just solve the equation for each continuous sample path of $G(t)$.  Let $T>0$. We set $\widetilde{G}(t)=G(T)$ when $t>T$ so that the Laplace transform of $\tilde{G}$ exists. Consider the equation 
\begin{gather*}
y(t)=x(0)-\frac{k}{\Gamma(\alpha)}\int_0^t(t-s)^{\alpha-1}y(s)\,ds+\widetilde{G}(t).
\end{gather*}
Take the Laplace transform (denoted by $\mathcal{L}$) on both sides.
Since $\mathcal{L}(t^{\alpha-1})=\Gamma(\alpha)s^{-\alpha}$, we find
\begin{gather*}
\mathcal{L}(y)=\frac{x_0s^{\alpha-1}}{s^{\alpha}+k}+\mathcal{L}(\widetilde{G})\Bigl(1-\frac{k}{s^{\gamma}+k}\Bigr).
\end{gather*}
We have by the Laplace transform of $e_{\alpha,k}$ that 
\[
y(t)=x_0e_{\alpha, k}(t) +\widetilde{G}(t)+\int_0^t \widetilde{G}(t-s)\dot{e}_{\alpha,k}(s)\,ds.
\]
Clearly,
\[
x(t)=y(t),~t\le T.
\]
Since $T$ is arbitrary, then, we have for any $t\ge 0$ that the unique solution $x(t)$ is given by
\begin{gather*}
x(t)=x_0e_{\alpha,k}(t)+ \biggl( G(t)+\int_0^t G(t-s)\dot{e}_{\alpha,k}(s)\,ds \biggr).
\end{gather*}

Now, we argue that $X_2$ in \eqref{eq:solnlinear} can also be written as
\begin{gather*}
X_2(t)=-\frac{C_H}{k}\int_0^t \dot{e}_{\alpha,k}(t-\tau)\,dB_H(\tau).
\end{gather*}
We first of all rewrite
\[
\int_0^t G(t-s)\dot{e}_{\alpha,k}(s)\,ds=\frac{C_H}{\Gamma(\alpha)}\int_0^t\int_0^{t-s}(t-s-\tau)^{\alpha-1}dB_H(\tau) \dot{e}_{\alpha,k}(s)\,ds .
\]
As in the proof of Lemma \ref{lmm:groupforG}, we may mollify the random path. Then, we can change the order of integration. Taking the mollifying parameter to zero, we get
\[
\int_0^t G(t-s)\dot{e}_{\alpha,k}(s)\,ds=\frac{C_H}{\Gamma(\alpha)}\int_0^t\int_0^{t-\tau}(t-s-\tau)^{\alpha-1}\dot{e}_{\alpha,k}(s)\,ds\,dB_H(\tau).
\]

By the identity for $\dot{e}_{\alpha,k}$ (Eq. \eqref{eq:ederiv}), we have
\[
\frac{1}{\Gamma(\alpha)}\int_0^{t-\tau}(t-s-\tau)^{\alpha-1}\dot{e}_{\alpha,k}(s)ds
=-g_{\alpha}(t-\tau)-\frac{1}{k}\dot{e}_{\alpha,k}(t-\tau).
\]
This then yields
\[
\int_0^t G(t-s)\dot{e}_{\alpha,k}(s)\,ds=-\frac{C_H}{\Gamma(\alpha)}\int_0^t
(t-\tau)^{\alpha-1}dB_H(\tau)
-\frac{C_H}{k}\int_0^t\dot{e}_{\alpha,k}(t-\tau)\,dB_H(\tau).
\]
This then shows the claim.
\end{proof}

$X_2$ is Gaussian process since $B_H$ is a Gaussian process. The mean of $X_2$ is clearly zero. We can investigate the variance to see its asymptotic behavior.
\begin{remark}
$X_2$ never converges in $L^p$ or almost surely, so we only consider convergence in distribution.
\end{remark}

For notational convenience, let us denote
\begin{gather}\label{eq:eqr}
r(t)=-\dot{e}_{\alpha,k}(t)\ge 0.
\end{gather}
By the isometry \eqref{eq:innerh}, we can compute that
\begin{multline}
\Sigma(t)=\var(X_2(t))=H(2H-1)\frac{C_H^2}{k^2}\int_0^t \int_0^{t}r(t-s)r(t-\tau)|s-\tau|^{2H-2}d\tau ds\\
=\frac{1}{k^2\Gamma(1-\alpha)}\int_0^t\int_0^{t}r(s)r(\tau)|\tau-s|^{2H-2}d\tau ds.
\end{multline}

\begin{lemma}\label{lmm:sigmaconv}
Let $\alpha\in (1-H, 1)$. $\Sigma=\lim_{t\to\infty}\Sigma(t)$ exists and there exist $C_1>0, C_2>0$ such that
\begin{gather}
C_1t^{2H-2-\alpha}<\Sigma-\Sigma(t)<C_2t^{2H-2-\alpha}.
\end{gather}
\end{lemma}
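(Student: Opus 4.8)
The plan is to analyze the double integral $\Sigma(t) = \frac{1}{k^2\Gamma(1-\alpha)}\int_0^t\int_0^t r(s)r(\tau)|s-\tau|^{2H-2}\,d\tau\,ds$ directly, using the decay estimates on $r(t) = -\dot e_{\alpha,k}(t)$ from Lemma \ref{lmm:mlfund}. Recall from there that $r(s)\sim C s^{\alpha-1}$ as $s\to 0^+$ and $C_1 s^{-\alpha-1}\le r(s)\le C_2 s^{-\alpha-1}$ for $s\ge 1$. The first step is to check that $\Sigma := \lim_{t\to\infty}\Sigma(t)$ exists, i.e. that the integral over $[0,\infty)^2$ converges. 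The integrand is nonnegative (since $r\ge 0$ and $|s-\tau|^{2H-2}\ge 0$), so $\Sigma(t)$ is monotone increasing in $t$ and it suffices to bound the full integral. Near the diagonal and near the origin one uses $r(s)\sim s^{\alpha-1}$ together with $1/H$-type integrability (or Lemma \ref{lmm:embeddingH}: $r\in L^{1/H}_{loc}$ since $\alpha-1 > -H$ is exactly $\alpha > 1-H$); at infinity one uses the bound $r(s)\le C_2 s^{-\alpha-1}$, and the tail $\int\int_{s,\tau\ \text{large}} s^{-\alpha-1}\tau^{-\alpha-1}|s-\tau|^{2H-2}$ converges because the total homogeneity degree is $2(-\alpha-1) + (2H-2) + 2 = 2H - 2\alpha - 2 < 0$ (since $\alpha > 1-H > 0$). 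So $\Sigma$ is finite.

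Next, for the rate, write $\Sigma - \Sigma(t) = \frac{1}{k^2\Gamma(1-\alpha)}\bigl(\iint_{[0,\infty)^2} - \iint_{[0,t]^2}\bigr) r(s)r(\tau)|s-\tau|^{2H-2}\,ds\,d\tau$. The complement of $[0,t]^2$ in $[0,\infty)^2$ is $\{s>t\}\cup\{\tau>t\}$, and by symmetry this difference equals $\frac{2}{k^2\Gamma(1-\alpha)}\int_t^\infty\int_0^\infty r(s)r(\tau)|s-\tau|^{2H-2}\,d\tau\,ds$ (the overlap $\{s>t,\tau>t\}$ being counted appropriately, or simply bounded from above and below separately). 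For the upper bound, split the inner $\tau$-integral at $\tau = s/2$, $\tau\in(s/2,\,2s)$ near the diagonal, and $\tau > 2s$; using $r(\tau)\le C\tau^{-\alpha-1}$ for $\tau\ge 1$ and $r(\tau)\lesssim \tau^{\alpha-1}$ for small $\tau$ in each regime, one finds $\int_0^\infty r(\tau)|s-\tau|^{2H-2}\,d\tau \le C s^{-\alpha-1+2H-1} = C s^{2H-\alpha-2}$ for $s\ge 1$ (the near-diagonal contribution dominates and scales like $r(s)\cdot s^{2H-1}$). Then $\int_t^\infty r(s)\cdot C s^{2H-\alpha-2}\,ds \le C\int_t^\infty s^{-\alpha-1}s^{2H-\alpha-2}\,ds = C'\, t^{2H-2\alpha-2}$ — wait, this would give the wrong exponent, so the dominant contribution must instead come from the regime $\tau = O(1)$ (small $\tau$), where $r(\tau)\sim\tau^{\alpha-1}$, $\int_0^1 \tau^{\alpha-1}(s-\tau)^{2H-2}\,d\tau \sim s^{2H-2}$ for large $s$; this gives $\int_t^\infty r(s)\,s^{2H-2}\,ds \sim \int_t^\infty s^{-\alpha-1}s^{2H-2}\,ds \asymp t^{2H-2-\alpha}$, the claimed rate. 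A matching lower bound of the same order $t^{2H-2-\alpha}$ comes from restricting the region to $\tau\in(0,1)$, $s>t$, using the lower bounds $r(\tau)\ge c\tau^{\alpha-1}$ near $0$ and $r(s)\ge C_1 s^{-\alpha-1}$ for $s\ge 1$, and $|s-\tau|^{2H-2}\ge c s^{2H-2}$ there.

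The main obstacle is the careful bookkeeping of the competing contributions to the single integral $\int_0^\infty r(\tau)|s-\tau|^{2H-2}\,d\tau$ for large $s$: one must verify that the contribution from $\tau$ near $0$ (scaling like $s^{2H-2}$, since $r$ is integrable near $0$ with $\int_0^1 r \asymp 1$) dominates the near-diagonal contribution (which scales like $s^{\alpha-1}\cdot s^{2H-1} = s^{2H+\alpha-2}$) — and indeed $2H-2 > 2H+\alpha-2 \iff \alpha > 0$ is false, so in fact $s^{2H+\alpha-2}$ could dominate; one then needs $s^{-\alpha-1}\cdot s^{2H+\alpha-2} = s^{2H-3}$ integrated over $[t,\infty)$, giving $t^{2H-2}$, which is faster decay than $t^{2H-2-\alpha}$, hence subdominant. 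So the upshot is that the slowest-decaying piece is precisely the small-$\tau$ times large-$s$ cross term, yielding the sharp two-sided rate $t^{2H-2-\alpha}$. Assembling the upper and lower bounds with explicit constants $C_1, C_2 > 0$ (absorbing $k$, $\Gamma(1-\alpha)$, and the constants from Lemma \ref{lmm:mlfund}) completes the proof.
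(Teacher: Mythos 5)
Your argument is essentially the paper's: reduce the remainder by symmetry to $2\int_t^\infty r(s)\bigl(\int_0^s r(\tau)(s-\tau)^{2H-2}\,d\tau\bigr)\,ds$ (up to the constant $k^{-2}\Gamma(1-\alpha)^{-1}$), show the inner integral is $\asymp s^{2H-2}$ for large $s$ --- upper bound by splitting the $\tau$-range, lower bound by restricting to $\tau\in(0,1)$ where $(s-\tau)^{2H-2}\ge s^{2H-2}$ --- and then integrate against $r(s)\asymp s^{-\alpha-1}$ to obtain the two-sided rate $t^{2H-2-\alpha}$, exactly as in the paper. The only blemish is in your final bookkeeping: near the diagonal with $\tau\approx s$ large you should use $r(\tau)\asymp \tau^{-\alpha-1}$, not $\tau^{\alpha-1}$, so that region contributes $O(t^{2H-2\alpha-2})$, which is still subdominant and leaves your conclusion unchanged.
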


\begin{proof}

By Lemma \ref{lmm:mlfund} and \eqref{eq:eqr},
 $r$ is positive  and 
\begin{gather*}
\int_0^{\infty}r(t)\,dt=1.
\end{gather*}
By Lemma \ref{lmm:mlfund}, there exist $C_1>0, C_2>0$ such that for $t\ge 1$
\[
C_1t^{-\alpha-1}\le r\le C_2t^{-\alpha-1}.
\]
Then, that $\Sigma=\lim_{t\to\infty}\Sigma(t)$ exists is clear.

Consider the remainder $\Sigma-\Sigma(t)$, which is an integral over
the region $\mathbb{R}_{\ge0}^2\setminus [0,t]\times[0, t]$. Due to the symmetry, we have 
\begin{gather*}
k^2\Gamma(1-\alpha)(\Sigma-\Sigma(t))=2\int_t^{\infty}ds~r(s)\int_0^s r(\tau)(s-\tau)^{2H-2}d\tau.
\end{gather*}
Consider that $t$ is large and therefore $s\ge t>1$.  Below, the letter $C$ denotes a generic constant which is independent of $s$ and $t$ but the concrete value could change from line to line. Denote the inside of the above integral as
\begin{gather*}
J(s)=\int_0^s r(\tau)(s-\tau)^{2H-2}d\tau\le \int_0^1  r(\tau)(s-\tau)^{2H-2}d\tau
+\int_1^{s}C \tau^{-\alpha-1}|s-\tau|^{2H-2}d\tau.
\end{gather*}
The first term is controlled by $(s-1)^{2H-2}\int_0^1r(\tau)d\tau$. The second term is estimated as:
\begin{multline*}
Cs^{2H-2-\alpha}\left(\int_{1/s}^{1/2} z^{-1-\alpha}(1-z)^{2H-2}dz+\int_{1/2}^1z^{-1-\alpha}(1-z)^{2H-2} dz \right) \\
\le Cs^{2H-2-\alpha}\left(2^{2-2H}\frac{1}{\alpha}(s^{\alpha}-2^{\alpha})+\bar{C}\right)\le Cs^{2H-2},
\end{multline*}
where $\bar{C}=\int_{1/2}^1z^{-1-\alpha}(1-z)^{2H-2} dz$ independent of $s$. Hence by the asymptotic behavior of $r$,
\[
\Sigma-\Sigma(t)\le C\int_t^{\infty}|r(s)|s^{2H-2}du\le C t^{2H-2-\alpha}.
\]

For the other direction, we just note $J(s)\ge s^{2H-2}\int_0^1 |r(\tau)|\,d\tau$.
\end{proof}

\begin{proof}[Proof of Theorem \ref{thm:linear}]
By inspection of the solution \eqref{eq:solnlinear}, it is clear that $X_1\to 0$ almost surely and in $L^2$ as $t\to\infty$. We only have to focus on $X_2$. 

Since $X_2$ is a Gaussian process with mean zero, by Lemma \ref{lmm:sigmaconv}, $\var(X_2)$ converges and thus $X_2$ converges in distribution. We can now show the convergence of the covariance 
\begin{multline*}
h(\tau; t)=\mathbb{E}(X_2(t)X_2(t+\tau))
=\frac{1}{k^2\Gamma(1-\alpha)}\int_0^t \int_0^{t+\tau}r(t-u)r(t+\tau-v)|u-v|^{2H-2}dvdu\\
=\frac{1}{k^2\Gamma(1-\alpha)}\int_0^t\int_0^{t+\tau}r(u)r(v)|v-\tau-u|^{2H-2}dvdu.
\end{multline*}

Denote
\begin{gather}\label{eq:covarianceh}
h(\tau)=\frac{1}{k^2\Gamma(1-\alpha)}\int_0^{\infty}\int_0^{\infty}r(u)r(v)|v-\tau-u|^{2H-2}dvdu \ge 0.
\end{gather}
We argue that there exists $C>0$ depending on $H,\alpha, k$ such that $h(\tau)\le C$. To do this, we use $\tilde{r}(t)$ to represent the even extension of $r$. Then, we find 
\begin{multline*}
\int_0^{\infty}\int_0^{\infty}r(u)r(v)|v-\tau-u|^{2H-2}dvdu=
\int_{\tau}^{\infty}\int_0^{\infty}r(u-\tau)r(v)|v-u|^{2H-2}dvdu\\
\le \int_{0}^{\infty}\int_0^{\infty}\tilde{r}(u-\tau)r(v)|v-u|^{2H-2}dvdu
\le \frac{1}{H(2H-1)}\|\tilde{r}(\cdot-\tau)\|_{\mathscr{H}_0}
\|r\|_{\mathscr{H}_0}
\end{multline*}
The last inequality follows from the fact that \eqref{eq:innerh} gives the inner product. According to Lemma \ref{lmm:mlfund}, the asymptotic behavior of $r(t)=-\dot{e}_{\alpha, k}$ implies that 
\[
\|\tilde{r}(\cdot-\tau)\|_{1/H}^{1/H}\le 2\|r\|_{1/H}^{1/H}<\infty
\]
since $\alpha\in (1-H, 1)$. Lemma \ref{lmm:embeddingH} implies that $h(\tau)$ is bounded.

This means that the integral in \eqref{eq:covarianceh} converges as $t\to\infty$, for $-\infty<\tau<\infty$:
\[
h(\tau; t)\to h(\tau)=\frac{1}{k^2\Gamma(1-\alpha)}\int_0^{\infty}\int_0^{\infty}r(u)r(v)|v-\tau-u|^{2H-2}dvdu \le C.
\]
Consequently, $X_2$ converges in distribution to a stationary process. The limit process $x_{\infty}(t)$ has the covariance $h(\tau)$ to be bounded. 

Since $h(\tau)$ is bounded, it is a tempered distribution. The Fourier transform exists. The following formal computation can be justified by considering $h(\tau)e^{-\e \tau^2}$ and then taking the limit $\e\to 0$ under the topology of the tempered distribution.
\begin{multline*}
\int_{-\infty}^{\infty}e^{-i\omega \tau} \int_0^{\infty}\int_{\tau}^{\infty}r(u-\tau)r(v)|u-v|^{2H-2}du dv d\tau \\
=\int_0^{\infty}\int_{-\infty}^{\infty} \int_{-\infty}^u r(u-\tau) e^{-i\omega\tau}d\tau |u-v|^{2H-2}r(v) dudv.
\end{multline*}
The inner most integral turns out to be 
\begin{gather*}
e^{-i\omega u}\int_0^{\infty}e^{i\omega\tau}r(\tau)d\tau=I(-i\omega)e^{-i\omega u}, 
\end{gather*}
with 
\begin{gather*}
I(s)=\frac{k}{s^{\alpha}+k}.
\end{gather*} 
The whole thing turns out to be 
\begin{gather*}
I(-i\omega)I(i\omega)\int_{-\infty}^{\infty}e^{-i\omega z}|z|^{2H-2}dz
= \frac{k^2}{|(i\omega)^{\alpha}+k|^2}(2\Gamma(2H+1)\sin(H\pi))|\omega|^{1-2H}).
\end{gather*}
This shows the first claim.

If $\alpha=\alpha^*=2-2H$, we find that 
\[
\mathcal{F}(h(\tau))=\frac{2\sin(H\pi)|\omega|^{1-2H}}{|(i\omega)^{\alpha}+k|^2}.
\]

Recall that we have the identity 
\[
\int_0^{\infty}e^{-ts}E_{\alpha}(-k t^{\alpha})dt=\frac{s^{\alpha-1}}{s^{\alpha}+k}.
\]
It follows that 
\[
\int_{-\infty}^{\infty}e^{-i\omega t}E_{\alpha}(-k|t|^{\alpha})dt=2\frac{Re((i\omega)^{\alpha-1})(k+(-i\omega)^{\alpha})}{|k+(i\omega)^{\alpha}|^2}
=\frac{2k\sin(\alpha \pi/2)|\omega|^{\alpha-1}}{|k+(i\omega)|^2}.
\]
Hence, we find in this case 
\begin{gather*}
h(\tau)=\frac{1}{k}e_{\alpha, k}(\tau).
\end{gather*}

It follows that the final equilibrium is a normal distribution with variance 
\[
\Sigma=h(0)=\frac{1}{k}
\]
 and the last claim follows.
\end{proof}

\subsection{The general case}

We have proved that for linear regimes, when $\alpha=\alpha^*$ is considered, the distribution converges to the Gibbs measure with algebraic rate. The linear forcing case is special, but it shows that our model makes physical meaning.
For general forcing regimes with the `fluctuation-dissipation theorem' satisfied ($\alpha=\alpha^*$), proving the ergodicity and that the distribution converges to the Gibbs measure algebraically seems hard. We believe this problem can be solved by figuring out some Markovian representations. In the following, we propose two such possible Markovian embedding approaches that may be helpful for studying the asymptotic behavior. 

\subsubsection{Infinitely dimensional Ornstein-Uhlenbeck process with mixing}

If the kernel $\gamma(t)$ is the sum of finitely many exponentials, it is well known the GLE has a Markovian representation with a particular mixing (see \cite{op11} for the details) so that the Gibbs measure is an invariant measure. However, the result corresponding to a general kernel with fat tail is yet unknown.  In our FSDE, the kernel $\gamma(t)=\frac{\theta(t)}{\Gamma(1-\alpha)}t^{-\alpha}$ is of fat tail but it is completely monotone. A completely monotone function is the Laplace transform of a Radon measure on $[0,\infty)$ by the famous Bernstein theorem \cite{widder41}. In other words, the kernel $\gamma(\cdot)$ can be written as superpositions of infinitely many exponentials. Based on this observation, we can formally rewrite our FSDE model to an infinite-dimensional OU process with mixing. We hope the techniques in \cite{op11} may be generalize to this infinite OU process to discuss the ergodicity of our FSDE model. This seems beyond the scope of this paper and we leave the rigorous discussion to future.

To understand the idea, we first of all consider the deterministic equation 
\begin{gather}\label{eq:fode1}
D_c^{\alpha}x=\gamma(t)*(\theta(t)\dot{x})=x,\ \ x(0)=x_0.
\end{gather}
It is well-known that the solution of this equation is $x(t)=x_0E_{\alpha}(t^{\alpha})$, which is continuous on $[0,\infty)$ and smooth on $(0,\infty)$, and further $\dot{x}\ge 0$ \cite{liliu_prep}. 

The kernel $\gamma(t)$ is completely monotone and it can be written as
\begin{gather}
\gamma(t)=\int_0^{\infty}e^{-\lambda t}\rho(\lambda)\,d\lambda,~~
\rho(\lambda)=\frac{1}{B(\alpha, 1-\alpha)} \lambda^{\alpha-1}.
\end{gather}
Here $B(\cdot, \cdot)$ is the Beta function. We then decouple the fractional ODE \eqref{eq:fode1} as an infinitely dimensional Markovian process with a mixing effect:
\begin{gather}\label{eq:mixing}
\displaystyle
\begin{cases}
0=x(t)+\xi(t), ~~ t>0,& x(0+)=x_0, \\
\dot{\xi}_{\lambda}(t)=-\lambda\xi_{\lambda}(t)-\sqrt{\rho(\lambda)}\dot{x}(t),& \xi_{\lambda}(0)=0, \\
\xi(t)=\lim_{\epsilon\to 0}\int_0^{\infty}e^{-\lambda \epsilon}\sqrt{\rho}\xi_{\lambda}(t)\,d\lambda.
\end{cases}
\end{gather}
We solve the second equation in \eqref{eq:mixing} as
\begin{gather}
\xi_{\lambda}(t)=-\int_0^t\sqrt{\rho(\lambda)}e^{-\lambda(t-s)}\dot{x}(s)\,ds, 
\end{gather}
which implies that $\xi$ in the third equation is well-defined. 
Provided $\dot{x}\ge 0$, we switch the order of integration for $\xi$ and applying monotone convergence theorem,
\begin{multline}
\xi(t)=-\lim_{\epsilon\to 0}\int_0^{\infty}\int_0^t\rho(\lambda) e^{-\lambda(t-s+\epsilon)}\dot{x}(s)\,ds\,d\lambda\\
=-\lim_{\epsilon\to 0}\frac{1}{\Gamma(1-\alpha)}\int_0^t(t-s+\epsilon)^{-\alpha}\dot{x}(s)\,ds=-D_c^{\alpha}x(t).
\end{multline}
The equation $x=D_c^{\alpha}x,\ t>0$ then follows. This system then decouples the memory to a system of uncountable Markovian functions with the simple mixing given by the third equation in \eqref{eq:mixing}. 

\begin{remark}\label{rmk:subtlety}
Let us mention a subtlety of the system: it seems that the initial
value of $x$ is unimportant as one can reduce the system to
\begin{gather*}
\begin{split}
& \dot{\xi}_{\lambda}(t)=-\lambda\xi_{\lambda}(t)+\sqrt{\rho(\lambda)}\dot{\xi}(t),~ t>0.~~\xi_{\lambda}(0)=0.\\
& \xi(t)=\lim_{\epsilon\to 0}\int_0^{\infty}e^{-\lambda \epsilon}\sqrt{\rho(\lambda)}\xi_{\lambda}(t)\,d\lambda.
\end{split}
\end{gather*}
This seems to be solvable without considering $x_0$. Actually, this system is not well-posed. The reason is that the equation for $\xi_{\lambda}$ may not be valid at $t=0$ and $\lim_{t\to 0}\xi(t)\neq \xi(0)=0$. (In the original system, $\xi(0)=\xi(0+)=x(0+)$ is equivalent to $\lim_{t\to 0}D_c^{\alpha}x=0$.) We must know $\lim_{t\to 0}\xi(t)=\lim_{t\to 0}D_c^{\alpha}x$ to start the process, which is equivalent to assigning the initial value of $x$.
\end{remark}

Back to our FSDE \eqref{eq:fsde1}, the computation for the deterministic case then leads us to consider: 
\begin{gather}
\displaystyle
\begin{cases}
V'(x(t))=\xi(t),& t>0\\
\xi(t)=\lim_{\epsilon\to 0^+}\int_0^{\infty}\xi_{\lambda}(t)e^{-\epsilon\lambda}\rho(\lambda)^{1/2}\,d\lambda,&t>0\\
\dot{\xi}_{\lambda}(t)=-\lambda\xi_{\lambda}(t)-\sqrt{\rho(\lambda)}\dot{x}(t)+\sqrt{2\lambda}\dot{W}_{\lambda}(t). & 
\end{cases}
\end{gather}
 Here we assume $\xi_{\alpha}(0)$'s are i.i.d, normal with variance $1$. This is a random system of differential algebraic equations (DAE), and clearly Markovian. The issue is that we have an uncountable-dimensional stochastic process driven by an uncountable-dimensional Wiener process (normal Brownian motion).  
 
With the random noise, we may not be able justify the computation as we did for the deterministic cases. However, a formal computation may still be illustrating, through which we argue that this DAE system is equivalent to our FSDE. By solving $\xi_{\lambda}$ formally, we have
\begin{multline}\label{eq:randomxi}
\xi(t)=\lim_{\epsilon\to 0^+}\left(\int_{[0,\infty)}\xi_{\lambda}(0) \sqrt{\rho} e^{-\lambda(t+\epsilon)}d\lambda
+\int_{[0,\infty)}\int_0^t\sqrt{2\lambda\rho} e^{-\lambda(t-s+\epsilon)}dW_{\lambda}(s)d\lambda\right)\\
-\lim_{\epsilon\to 0} \int_{[0,\infty)}\int_0^t \rho(\lambda)e^{-\lambda(t-s+\epsilon)}\dot{x}(s)\,ds d\lambda
=:R(t)+K(t).
\end{multline}
In the case $t>0, \tau\ge 0$, we have 
\begin{multline}
\mathbb{E}(R(t)R(t+\tau))=\int_{[0,\infty)}\rho e^{-\lambda(2t+\tau)} \var(\xi_{0})\,d\lambda
+\int_{[0,\infty)}\int_0^t2\lambda\rho(\lambda) e^{-\lambda(2t+\tau-2s)}dsd\lambda\\
=\gamma(\tau+2t)+\gamma(\tau)-\gamma(\tau+2t)=\gamma(\tau).
\end{multline}
Of course, the change of order of integration and expectation is not justified rigorously, but the computation is still interesting. Since both $R(t)$ and $C_H\dot{B}_H$ are Gaussian process and they have the same covariance, we can then identify them.

For the term $K(t)$ in \eqref{eq:randomxi}, since $\rho(\lambda) e^{-\epsilon\lambda}\in L^1[0,\infty)$, we may change the order of integration and $K(t)=-D_c^{\alpha}x$ for $t>0$. Hence, 
\begin{gather}
\xi=-D^{\alpha}x+R(t), t>0.
\end{gather}
This then formally verifies that FSDE \eqref{eq:fsde1} can be obtained from the Markovian DAE system.

The same subtlety in Remark \ref{rmk:subtlety} appears here. $\xi(0)\neq -D^{\alpha}x|_{t=0}+R(0)$, which allows us to specify the initial condition $x_0$. 

Since the Gibbs measures for the GLE with the kernel to be finitely exponentials are invariant measures \cite{op11}, we think it is promising to show that Gibbs measures are the final equilibrium measures for our model. The discussion here provides a possible framework for the study of general $V(x)$. To study the stochastic DAE system, one may have to put some structure in the space of infinite-dimensional Gaussian process, and then somehow figure out that the Gibbs measure for the whole system is an invariant measure. This will then be left for future.
  
\subsubsection{A heat bath model}

In this subsection, we summarize the heat bath model  proposed in \cite{jp97,rt02} for the generalized Langevin equation. The key point is that one can consider the whole dynamics of the particle together with the heat bath, which is Markovian. If one integrates out the degrees of freedom for the heat bath, one obtains the GLE.  The whole heat bath model is the continuous version of the Kac-Zwanzig model mentioned in \cite{zwanzig73,gks04,kou08}.
We think this heat bath model may be another promising direction to study the ergodicity and the asymptotic behavior of our FSDE model.   Formally, if one takes the $m\to 0$ limit for the special kernel $\gamma(t)\propto |t|^{-\alpha}$ (the discussion in \cite{jp97,rt02} is not applicable to this kernel though), our FSDE can be obtained. This limit for the classical Langevin equation (Eq. \eqref{eq:langevin}) is called the Smoluchowski-Kramers approximation \cite{freidlin04} and the limit for generalized Langevin equation has not been studied yet to our best knowledge.  We will summarize the formulation here briefly and then give a brief discussion to connect it with our FSDE model.

Assume that the particle is put in a heat bath modeled by infinitely many free phonons and the corresponding scalar field $\varphi$ is given by the massless Klein-Gordon equation (which is a wave equation), 
\begin{gather}
(-\partial_{t}^2+\Delta)\varphi=0.
\end{gather}
The Lagrangian density is $
\mathcal{L}=-\frac{1}{2}\partial^{\mu}\varphi\partial_{\mu}\varphi, $
where $\mu$ goes over the time-spatial coordinate in relativity, and the Hamiltonian is
\begin{gather}
\mathcal{H}_h=\frac{1}{2}\int_{\mathbb{R}^n} (|\nabla\varphi|^2+|\pi|^2)dx,
\end{gather}
where $\pi=\partial_t\varphi$ should be regarded as a new variable.

This Hamiltonian motivates that the correct space for the heat bath is 
\[
\mathscr{V}=H^1(\mathbb{R}^n)\otimes L^2(\mathbb{R}^n)
\]
 with the inner product given by
\begin{gather}\label{eq:innerHbath}
\langle f, g\rangle=\int_{\mathbb{R}^n} (\nabla f_1\cdot\nabla g_1+f_2 g_2)\,dx, \forall f=(f_1, f_2)\in\mathscr{V}, g=(g_1, g_2)\in\mathscr{V} .
\end{gather}
Note that Gaussian measures can be constructed over this Hilbert space. $\forall f, g\in \mathscr{V}$ and $\xi$ is an $\mathscr{V}$-valued random variable satisfying a Gaussian measure $\mu_{\phi_0}^{\beta}$ indexed by $\phi_0\in\mathscr{V}$ and $\beta>0$, then, 
\begin{gather}\label{eq:gaussianident}
\mathbb{E}(\langle f, \xi-\phi_0\rangle \langle \xi-\phi_0, g\rangle)=\beta^{-1}\langle f, g\rangle.
\end{gather} 

The coupling between the particle and the heat bath is given by 
\begin{gather*}
\mathcal{H}_I=\int_{\mathbb{R}^n} \varphi(x)\rho(q-x)\,dx=\int_{\mathbb{R}^n} \varphi(x)\rho(x-q)\,dx, 
\end{gather*}
where $\rho$ is a radially symmetric function which can be understood as the coupling strength.
In literature \cite{jp97,rt02}, $\rho$ is assumed to be in $L^2$, so that the coupling strength is finite and can be approximated by the dipole expansion:
\begin{gather}
\mathcal{H}_I=\int_{\mathbb{R}^n}  \rho q\cdot\nabla\varphi\,dx+\frac{q^2}{2}\int_{\mathbb{R}^n} |\rho^2|\,dx.
\end{gather}
The second term is some correction added to make the model clean so that the GLE can be derived from this model.

The total Hamiltonian that describes the coupling between the particle and the heat bath is given by 
\begin{multline}
\mathcal{H}=\frac{1}{2m}p^2+V(q)+\frac{1}{2}\int_{\mathbb{R}^n} (|\pi|^2+|\nabla\varphi|^2)\,dx+\int_{\mathbb{R}^n} \rho q\cdot\nabla\varphi \,dx+\frac{q^2}{2}\int_{\mathbb{R}^n} |\rho^2|\,dx\\
=\frac{1}{2m}p^2+V(q)+\frac{1}{2}\int_{\mathbb{R}^n}|\nabla\varphi+q\rho|^2+|\pi^2|\,dx .
\end{multline}
where $\lim_{|q|\to\infty}V(q)=\infty$ and $\exp(-\beta V(\cdot))\in L^1(\mathbb{R}^n)$ for any $\beta>0$.

With this coupling, the authors in \cite{jp97,rt02} showed that the particle satisfies the generalized Langevin equation obeying the `dissipation-fluctuation theorem' provided the initial data satisfy a certain Gaussian measure. The GLE for $n=1$ case is written as 
\begin{gather*}
\dot{q}=v, ~~
m\dot{v}=-V'(q)-\int_0^t\gamma(t-s)\dot{q}(s)\,ds+R(t),\\
\gamma(t)=\int_{\mathbb{R}}|\hat{\rho}|^2 e^{ikt} dk,~~\mathbb{E}(R(t)R(s))=\gamma(|t-s|).
\end{gather*}
With this result, the authors conclude the following:
\begin{proposition}
Suppose $R(t)$ is a $1D$ stationary Gaussian process with mean zero and
\begin{gather}
\mathbb{E}(R(t)R(s))=\gamma(|t-s|).
\end{gather} 
 If $\gamma$ is the Fourier transform of an $L^1(\mathbb{R})$ even nonnegative function, then there exists a coupling between $q(0)=q_0$ and $R(t)$ so that the equation 
\begin{gather}
\dot{q}=v, ~~
m\dot{v}=-V'(q)-\int_0^t\gamma(t-s)\dot{q}(s)\,ds+R(t)
\end{gather}
admits the Gibbs measure 
\begin{gather}\label{eq:particlegibbs}
\mu(dqdv)\propto \exp\left(-\frac{mv^2}{2}-V(q)\right)\,dqdv,
\end{gather}
as the invariant measure.

For any initial distribution $\mu^0$ that is absolutely continuous with respect to $\mu$ and any coupling between $q_0$ and $R(t)$,  $\mu^t$ converges weakly to the Gibbs measure $\mu$.
\end{proposition}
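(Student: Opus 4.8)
The plan is to realize the stated GLE as the reduced dynamics of the Hamiltonian particle--field system introduced above, with the coupling profile $\rho$ chosen so that the induced random force has covariance exactly $\gamma$. Invariance of the Gibbs measure then comes from Liouville's theorem (the field dynamics being linear), and the weak convergence $\mu^{t}\to\mu$ is reduced to the dispersive, ``return to equilibrium'' analysis of \cite{jp97,rt02}. Throughout one takes $n=1$, as in the statement, and writes $v=p/m$.

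\emph{Choice of $\rho$ and of the coupling.} Write $\gamma=\mathcal{F}(g)$ with $g\in L^{1}(\mathbb{R})$ even and nonnegative. Since $\int g<\infty$, the function $\sqrt{g}$ is real, even, and in $L^{2}(\mathbb{R})$, so $\rho:=\mathcal{F}^{-1}(\sqrt{g})\in L^{2}(\mathbb{R})$ is real and even; moreover $|\hat{\rho}|^{2}=g$, and since $g$ is even the sign in the exponential is immaterial, so $\int_{\mathbb{R}}|\hat{\rho}|^{2}e^{ikt}\,dk=\gamma(t)$. Form $\mathcal{H}$ with this $\rho$ and sample the full initial datum $(q_{0},p_{0},\varphi(0),\pi(0))$ from the full Gibbs measure $\propto e^{-\beta\mathcal{H}}$; equivalently, $q_{0}$ from the particle Gibbs marginal, $p_{0}$ Maxwellian, and the field from the Gaussian measure $\mu_{\phi_{0}}^{\beta}$ centered at $\phi_{0}=\phi_{0}(q_{0})\in\mathscr{V}$, the stationary point of $\mathcal{H}$ in the field variables at fixed $q_{0}$ (so that $\nabla\varphi(0)=-q_{0}\rho$ and $\pi(0)=0$). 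Solving the free Klein--Gordon evolution explicitly and substituting into the particle equations, the reduction carried out in \cite{jp97,rt02} shows that $(q(t),v(t))$ solves the stated GLE with $R(t)$ a bounded linear functional of the centered field $(\varphi(0)-\phi_{0},\pi(0))$. The Gaussian identity \eqref{eq:gaussianident} then yields $\mathbb{E}(R(t)R(s))=\gamma(|t-s|)$, and the joint law of $(q_{0},R(\cdot))$ thus obtained is the required coupling, with $R(t)$ correlated to $q_{0}$ through the center $\phi_{0}$.

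\emph{Invariance.} The full Gibbs measure $\propto e^{-\beta\mathcal{H}}$ on the particle--field phase space $\mathbb{R}^{2}\times\mathscr{V}$ is invariant under the Hamiltonian flow: formally by Liouville's theorem, and rigorously because the field evolves by the linear massless Klein--Gordon equation, so the flow is an affine symplectic transformation for which the corresponding Gaussian/Gibbs measure is preserved, as established in \cite{jp97,rt02}. Its $(q,p)$-marginal is proportional to $\exp\bigl(-\beta(\tfrac{p^{2}}{2m}+V(q))\bigr)$, which in the variable $v=p/m$ is the particle Gibbs measure \eqref{eq:particlegibbs}. Starting the full flow from the full Gibbs measure, its $(q,v)$-component is precisely the GLE with the coupling constructed above; since the full measure is preserved, so is its marginal, hence $\mu$ is an invariant measure for the GLE.

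\emph{Convergence, and the main obstacle.} The heat bath is a free wave field on $\mathbb{R}$, which carries energy away from the particle; correspondingly $\gamma(t)=\mathcal{F}(g)(t)\to0$ as $|t|\to\infty$ by the Riemann--Lebesgue lemma, which is precisely where the hypothesis $g\in L^{1}$ enters, so the friction memory and the correlations of $R$ decay. Using the attendant dispersive estimates one shows, as in the return-to-equilibrium theorem of \cite{jp97,rt02}, that the law of the full process $(q(t),v(t),\varphi(t),\pi(t))$ converges weakly to the full Gibbs measure for every initial law absolutely continuous with respect to it, and projecting onto $(q,v)$ gives $\mu^{t}\to\mu$. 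The step I expect to be hardest is to allow an \emph{arbitrary} coupling between $q_{0}$ and $R(t)$ together with an arbitrary $\mu^{0}\ll\mu$: one must represent the joint law of $(q_{0},v_{0},R(\cdot))$ as the pushforward of some initial law on $\mathbb{R}^{2}\times\mathscr{V}$ and verify it is absolutely continuous with respect to the full Gibbs measure (using $\mu^{0}\ll\mu$ and the structure of Gaussian measures on $\mathscr{V}$), so that the return-to-equilibrium result applies, and then combine this embedding with the infinite-dimensional dispersive decay. Since \cite{jp97,rt02} do not treat the power-law kernel relevant to our FSDE, we confine ourselves to summarizing their construction and leave a self-contained argument for future work.
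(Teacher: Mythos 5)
Your proposal follows essentially the same route as the paper: the paper presents this proposition as a consequence of the particle--field heat bath construction of \cite{jp97,rt02}, with the coupling $\rho$ chosen so that $|\hat{\rho}|^{2}$ reproduces the spectral density of $\gamma$, invariance of \eqref{eq:particlegibbs} read off as the $(q,v)$-marginal of the invariant full Gibbs measure, and weak convergence imported from the return-to-equilibrium results of those works, exactly as you outline. The step you flag as hardest (handling an arbitrary coupling of $q_{0}$ with $R(\cdot)$ and arbitrary $\mu^{0}\ll\mu$) is likewise not argued independently in the paper, which states the proposition as a citation of \cite{jp97,rt02} rather than proving it.
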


Our FSDE model is similar to the problems studied in  \cite{jp97,rt02}, except that  $\gamma(t)\propto |t|^{-\alpha}$ and $m=0$. Note that the kernel $|t|^{-\alpha}$ is not the Fourier transform of an $L^1$ kernel. One can therefore mollify $\gamma$ by
\begin{gather}
\gamma_{\e}(t)=\eta_{\e}*\gamma(t), 
\end{gather}
so that $\gamma_{\e}$ is the Fourier transform of an $L^1$ kernel. One can then study the GLE with kernel $\gamma_{\e}$. If final equilibrium is preserved with $\epsilon\to 0$ limit, then the Gibbs measure is the equilibrium measure for the GLE
with kernel $|t|^{-\alpha}$. Then, formally, the Smoluchowski-Kramers approximation $m\to 0$ limit (if valid) yields
that the Gibbs measure proportional to $\exp(-V(q))$ is the final equilibrium measure of our FSDE \eqref{eq:fsde2}.  This provides another possible framework for general potential $V(x)$ and we leave the rigorous study for future.

\begin{remark}
 The Smoluchowski-Kramers approximation ($m\to 0$ limit) for the usual Langevin equations has been discussed in \cite{freidlin04}. However, for the generalized Langevin equation, the limit $m\to 0$ is subtle. The limit equation for a general kernel $\gamma$ may not
be a good initial value problem. The initial value problem
\[
\int_0^t\gamma(t-s)\dot{q}(s)\,ds=-V'(q)+R(t), ~~ q(0)=q_0
\]
generally admits no continuous solution if $\gamma(t)$ is bounded. Hence, the possible approach is to show first that convergence to Gibbs measure is valid for the GLE when $\gamma(t)\propto |t|^{-\alpha}$ and then show the $m\to 0$ limit can pass to the final equilibrium measures.
\end{remark}

\section*{Acknowledgements}

The work of J.-G Liu is partially supported by KI-Net NSF RNMS11-07444 and NSF DMS-1514826. The work of J. Lu is supported in part  by National Science Foundation under grant DMS-1454939. J. Lu would also like to thank Eric Vanden-Eijnden for helpful discussions. 

\appendix

\section{Proof of Theorem \ref{thm:existenceuniqueness}}

\begin{proof}
We just consider a sample point $x_0$ and a sample path $G$ with $G$ being continuous. We then construct a path that satisfies the integral equation given this sample initial data.

By Proposition \ref{pro:contofg}, $G(t)$ is continuous. Consider the sequence given by 
\begin{gather*}
x^{(0)}=x_0, 
\end{gather*} 
and $x^{(n)}, n\ge 1$ is given by
\begin{gather*}
x^{(n)}(t)=x_0-\frac{1}{\Gamma(\alpha)}\int_0^t(t-s)^{\alpha-1}V'(x^{(n-1)}(s))\,ds+G(t).
\end{gather*}

Assume $L$ is a Lipschitz constant for $V'(\cdot)$. Introducing $g_{\gamma}=\frac{\theta(t)}{\Gamma(\gamma)}t^{\gamma-1}$, we find that $\{g_{\gamma}\}_{\gamma>0}$ forms a convolution semigroup (Lemma \ref{lmm:groupforG}). We define 
\[
e^n=x^{(n)}-x^{(n-1)}.
\]
Explicit formula tells us that
\begin{gather*}
e^1=-V'(x_0)g_{\alpha+1}+G(t),
\end{gather*}
and that
\begin{gather*}
|e^n|=|-g_{\alpha}*(V'(x^{n-1})-V'(x^{n-2}))|\le L g_{\alpha}*|e^{n-1}|,\ \ n\ge 2.
\end{gather*}

Hence, 
\begin{gather*}
|e^n|\le L^{n-1} g_{(n-1)\alpha}*|e^1|.
\end{gather*}
Direct computation shows that $\sup_{0\le t\le T}g_{(n-1)\alpha}*|e^1|$ decays exponentially in $n$. Hence, $\sum_n |e^n|$
converges. It follows that $\sum_n e^n$ converges uniformly on any interval $[0, T]$ with $T\in (0,\infty)$. The limit is also a continuous function. It turns out that the limit satisfies the integral equation.

For the uniqueness, assume that both $x(t)$ and $y(t)$ are solutions. Then, we take a sample where both $x(t)$ and $y(t)$ are continuous. For this sample, $\forall t>0$,
\[
|x(t)-y(t)|=\frac{1}{\Gamma(\alpha)}\left|\int_0^t(t-s)^{\alpha-1}(V'(x(s))-V'(y(s)))\right|ds
\le L (g_{\alpha}*|x-y|)(t).
\]

Applying this inequality iteratively and using the semi-group property of $g_{\gamma}$, we find 
\[
|x-y|(t)\le L^n g_{n\alpha}*|x-y|.
\]
Fixing $T>0$, the right hand side goes to zero uniformly on $[0, T]$. Then, we find that $x=y$ on $[0, T]$ for this sample path. Since both solutions are continuous almost surely, then $x=y$ on $[0, T]$ almost surely. By the arbitrariness of $T$, $x=y$ almost surely. The uniqueness then is shown. This then completes the proof of the theorem.
\end{proof}

\bibliographystyle{unsrt}
\bibliography{probsde}
\end{document}